\newcommand{\pres}[3]{\textnormal{#1} \langle #2 \mid #3 \rangle}
\newcommand{\lra}[1]{\xleftrightarrow{\ast}_{#1}}
\newcommand{\xra}[1]{\xrightarrow{}^\ast_{#1}}
\newcommand{\xr}[1]{\xrightarrow{}_{#1}}
\newcommand{\trev}{\text{rev}}
\newcommand{\CF}{\mathcal{C}_{\operatorname{cf}}}
\newcommand{\DCF}{\mathcal{C}_{\operatorname{dcf}}}
\newcommand{\REG}{\mathcal{C}_{\operatorname{reg}}}
\newcommand{\IND}{\mathcal{C}_{\operatorname{ind}}}
\newcommand{\cc}{\mathcal{C}}
\newcommand{\ct}{\mathcal{T}}
\newcommand{\cR}{\mathcal{R}}
\newcommand{\cl}{\mathcal{L}}
\newcommand{\cP}{\mathcal{P}}
\newcommand{\loz}{\lozenge}
\DeclareMathOperator{\AFL}{AFL}
\DeclareMathOperator{\IP}{IP}
\DeclareMathOperator{\WP}{WP}
\newtheorem{theorem}{Theorem}[section] 
\newtheorem{lemma}[theorem]{Lemma}     
\newtheorem{corollary}[theorem]{Corollary}
\newtheorem{proposition}[theorem]{Proposition}
\theoremstyle{definition}
\newtheorem{definition}{Definition}
\newtheorem{example}{Example}
\numberwithin{equation}{section}
\newcommand{\aWP}{\WP[\alpha]}
\newcommand{\aaWP}{\WP[\alpha \sqcap \alpha]}
\begin{document}

\title{On the word problem for weakly compressible monoids}

\author{Carl-Fredrik Nyberg-Brodda}
\address{Alan Turing Building, Department of Mathematics, University of Manchester, United Kingdom.}
\email{carl-fredrik.nybergbrodda@manchester.ac.uk}

\thanks{The author gratefully acknowledges funding from the Dame Kathleen Ollerenshaw Trust, which is supporting his current research at the University of Manchester.}

\subjclass[2020]{20M05 (primary) 20F10, 20M35, 68Q45 (secondary)}

\date{\today}


\keywords{}

\begin{abstract}
We study the language-theoretic properties of the word problem, in the sense of Duncan \& Gilman, of weakly compressible monoids, as defined by Adian \& Oganesian. We show that if $\cc$ is a reversal-closed super-$\AFL$, as defined by Greibach, then $M$ has word problem in $\cc$ if and only if its compressed left monoid $L(M)$ has word problem in $\cc$. As a special case, we may take $\cc$ to be the class of context-free or indexed languages. As a corollary, we find many new classes of monoids with decidable rational subset membership problem. Finally, we show that it is decidable whether a one-relation monoid containing a non-trivial idempotent has context-free word problem. This answers a generalisation of a question first asked by Zhang in 1992. 
\end{abstract}

\maketitle



\section{Introduction}

\noindent The word problem for finitely presented monoids was introduced in 1914 by Thue \cite{Thue1914}. Though this problem would eventually turn out to be undecidable in general, via the remarkable work by Markov \cite{Markov1947} and Post \cite{Post1947}, there are many tractable cases. Particularly the word problem for \textit{one-relation} monoids has garnered a great deal of attention. Though it remains a tantalising open problem whether this problem is decidable, much can be said (see especially the recent survey \cite{NybergBrodda2021b}). Thue himself studied this problem, and solved the problem for some cases when the single defining relation is of the form $w = 1$. Such monoids are known as \textit{special} one-relation monoids. Adian \cite{Adian1960b, Adian1966} studied special monoids in-depth, and proved that the word problem is decidable for all special one-relation monoids, via a reduction to the word problem for one-relator groups; this latter problem is decidable by Magnus' famous theorem \cite{Magnus1932}. Adian's student Makanin \cite{Makanin1966, Makanin1966b, NybergBrodda2021c} later extended this to show that the word problem for any special $k$-relation monoid reduces to the word problem for some $k$-relator group. The author has recently proved similar language-theoretic reductions for special monoids \cite{NybergBrodda2020b}.

The work by Adian on special monoids would later be extended by Adian and his student Oganesian in the joint work \cite{Adian1978}. They defined a form of ``compression'', which is called \textit{weak compression} by the author in \cite{NybergBrodda2021b}. A monoid $M$ with defining relations $u_i = v_i$ (for $1 \leq i \leq k$) is called \textit{weakly compressible} if there is some non-empty word $\alpha$ such that every $u_i$ and $v_i$ begins and ends with $\alpha$. Associated to any weakly compressible monoid is a ``compressed'' \textit{left monoid} $L(M)$, in which the sum of the lengths of the defining relations are shorter than in the original monoid. The main idea is that the word problem for $M$ can be reduced to the word problem for $L(M)$. In particular, by combination with Adian's result, this solves the word problem in the class of \textit{subspecial} one-relation monoids, being the class of monoids which can be compressed to a special one-relation monoid. Subspecial monoids were studied already by Lallement \cite{Lallement1974}, and are (essentially) the one-relation monoids containing a non-trivial idempotent. 

Weak compression of subspecial monoids has received some further attention in the past few decades. In Zhang \cite{Zhang1992e} extended the above results on the word problem to solve the conjugacy problem in certain subspecial monoids. Kobayashi \cite{Kobayashi2000} used compression to prove that all one-relation monoids satisfy the topological finiteness condition $\operatorname{FDT}$, for which the subspecial case was the only outstanding case. Building on this, Gray \& Steinberg showed that all one-relation monoids satisfy the homological finiteness property $\operatorname{FP}_\infty$, at the same time studying the algebraic properties of subspecial monoids \cite{Gray2019}.

In another direction, a highly successful approach to studying the word problem for \textit{groups} came through the methods of formal language theory. Such methods were introduced to group theory by An{\={\i}}s{\={\i}}mov \cite{Anisimov1971}, and focus on the language-theoretic properties of the set of all words representing the identity element, calling this set the \textit{word problem} of the group (its properties are generally independent of the finite generating set chosen). An{\={\i}}s{\={\i}}mov showed that the word problem of a group is a regular language if and only if the group is finite, and that the class of groups with context-free word problem is closed under free products. A little over ten years later, Muller \& Schupp (supplemented by \cite{Dunwoody1985}) gave a remarkable algebraic charactersiation, generally referred to as the Muller-Schupp theorem: a finitely generated group has context-free word problem if and only if it is virtually free \cite{Muller1983}. 

The language of words representing the identity element in a monoid is generally not very enlightening (though for special monoids it is \cite{NybergBrodda2020b}, as in the case of groups). To remedy this, Duncan \& Gilman introduced a new language which encodes equality in a monoid \cite{Duncan2004}. This set, which is also termed the \textit{word problem} for the monoid, has many of the attractive properties of the analogous set for groups, such as invariance under generating set chosen. Furthermore, it is easy to show that a monoid has regular word problem if and only if it is finite, mimicking An{\={\i}}s{\={\i}}mov's result. The word problem also generally behaves well under taking free products \cite{Brough2019, NybergBrodda2021f}. This notion of the word problem has also been studied in e.g. \cite{Holt2008, Hoffmann2012}.

This paper seeks to study the language-theoretic properties of Adian \& Oganesian's weak compression. We will paint a fairly complete picture. The main theorem of this article (Theorem~\ref{Thm:MAIN}) will involve classes of languages $\cc$ which are \textit{super-$\AFL$s}, as introduced by Greibach \cite{Greibach1970}. Such classes generalise the context-free languages and the indexed languages; roughly speaking, an $\AFL$ is a super-$\AFL$ if it is closed under ``recursion''  (see \S\ref{Subsec:SuperAFL} for details). We will prove the following main theorem: 
\begin{theorem}\label{Thm:MAIN}
Let $M$ be a weakly compressible monoid, and let $L(M)$ be its left monoid. Let $\cc$ be a reversal-closed super-$\AFL$. Then $M$ has word problem in $\cc$ if and only if $L(M)$ has word problem in $\cc$.
\end{theorem}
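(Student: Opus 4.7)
The plan is to prove each implication by expressing one word problem as the image of the other under a sequence of AFL-preserved operations, together with the super-$\AFL$ closure under iterated substitution (``recursion''). Throughout I will work with the Duncan--Gilman encoding $\WP(\cdot) = \{u \# v^{\mathrm{rev}} : u = v\}$, which is precisely where the reversal-closure hypothesis on $\cc$ enters.

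The direction $\WP(M) \in \cc \Rightarrow \WP(L(M)) \in \cc$ should be the easier one. The generators of $L(M)$ correspond to explicit finite words over the generating set of $M$, so there is a morphism $\varphi$ from the free monoid on $L(M)$'s alphabet to the free monoid on $M$'s alphabet. I would pull $\WP(M)$ back along $\varphi$ (applied appropriately on each side of the separator $\#$, using reversal to handle the second component), and then intersect with the regular language of well-formed Duncan--Gilman pairs over $L(M)$'s alphabet. Standard $\AFL$-closure under inverse morphism, intersection with regular languages, and the assumed closure under reversal, then gives $\WP(L(M)) \in \cc$.

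The harder direction, $\WP(L(M)) \in \cc \Rightarrow \WP(M) \in \cc$, is where super-$\AFL$-ness earns its keep. The combinatorial input is the canonical factorisation of words in $M$'s alphabet around their maximal $\alpha$-occurrences: since every relation of $M$ begins and ends with $\alpha$, a rewriting $u \to v$ preserves the $\alpha$-backbone, and the equality $u =_M v$ decomposes block-by-block into equalities within $L(M)$. I would exhibit $\WP(M)$ as the result of substituting $\WP(L(M))$ (along with short $\alpha$-free boundary corrections) into a regular control language that tracks the $\alpha$-boundaries on both halves of the encoding; because relations in $M$ may themselves contribute further $\alpha$-blocks, the substitution must be iterated. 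This is not an $\AFL$ operation in general, but it is exactly the nested substitution closure that Greibach's notion of super-$\AFL$ provides, so $\cc$ absorbs the construction.

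The principal obstacle is the second direction, and specifically the bookkeeping needed to synchronise the $\alpha$-factorisation on the $u$-side with the mirrored $\alpha$-factorisation on the $v^{\mathrm{rev}}$-side of the Duncan--Gilman encoding. Reversal-closure of $\cc$ is what allows these two factorisations to be aligned into a single iterated substitution rather than a less tractable product of substitutions; without it, prefixes and suffixes swap roles and the nested substitution falls out of sync. Once this synchronisation is in place, the proof reduces to a combinatorial normal-form lemma on $\alpha$-factorisations, plus an invocation of Greibach's closure theorem for super-$\AFL$s.
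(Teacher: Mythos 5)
Your easy direction ($\WP(M)\in\cc\Rightarrow\WP(L(M))\in\cc$) is essentially the paper's argument: $\WP(L(M))$ is recovered from $\WP_A^M$ by an inverse homomorphism (sending $\#$ to $\alpha\#\alpha^\trev$ and expanding generators) together with intersection with a regular language, and this part of your sketch is fine.

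The hard direction, however, has a genuine gap at exactly the point you identify as ``bookkeeping''. In the Duncan--Gilman encoding an element of $\WP_A^M$ with $\alpha$-parts looks like $u_1\cdots u_k\,\#\,v_k^\trev\cdots v_1^\trev$ where, block by block, $u_i\#v_i^\trev$ must lie in (a copy of) the word problem of the compressed monoid. Two things go wrong with ``substituting $\WP(L(M))$ into a regular control language and iterating''. First, the mirrored block structure across $\#$ is not regular, so no regular control language can carry it. Second, and more fundamentally, substitution --- including Greibach's nested iterated substitution --- replaces each letter occurrence \emph{independently}; it has no mechanism to force the language fragment substituted on the $u$-side at position $i$ and the fragment substituted on the $v^\trev$-side at the mirrored position to come from the \emph{same} word $u_i\#v_i^\trev\in\WP(L(M))$. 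Without that correlation you generate pairs $u\#v^\trev$ with $u\neq_M v$. Reversal-closure does not repair this: in the paper it is only used to handle a right-hand-side reduction system symmetric to a left-hand-side one. The correlation is instead achieved by a different use of the monadic ancestor property: one forms the \emph{alternating product} $\WP_{\Sigma_1}^{L(M)}\star\cP_2$, whose preservation is proved via a monadic system whose rules are $u\#v^\trev\to\#$ for $u\#v^\trev$ in the given word problems --- each rule carries a matched pair as a single left-hand side, which is what substitution cannot do. Two further ingredients your sketch omits: the $\alpha$-part factors over an infinite suffix code, so the compressed equality actually lives in the free product $L(M)\ast\mathcal{F}$ with $\mathcal{F}$ free of infinite rank (hence the second factor $\cP_2$ of the alternating product), and blocks that are trivial in this free product must be cancelled independently on the two sides of $\#$ before the normal-form comparison applies (hence the bipartisan ancestors under $I_\alpha$ and $I_\alpha^r$, whose ancestry-preservation itself needs a nontrivial argument exploiting that $\alpha$ is self-overlap free). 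So while your overall architecture (canonical $\alpha$-factorisation plus super-$\AFL$ closure) points in the right direction, the central mechanism you propose --- iterated substitution into a regular control set --- cannot establish the block-wise synchronisation that the statement requires.
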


An overview of the article is as follows. In \S\ref{Sec:NotationEtc} we give some notation, useful concepts, and define two language-theoretic operations from \cite{NybergBrodda2021f}. In \S\ref{Sec:Weak_compression}, we define weak compression, amalgamating several authors' definitions and notations. In \S\ref{Sec:Proof}, we prove Theorem~\ref{Thm:MAIN}. Finally, in \S\ref{Sec:Context-free} we give some corollaries of Thoerem~\ref{Thm:MAIN}, particularly focussed on context-free monoids. In particular, we will obtain many monoids for which the rational subset membership problem is decidable (Corollary~\ref{Cor:CF_L(M)_has_dec_RSMP}); and that it is decidable whether a one-relation monoid containing a non-trivial idempotent has context-free word problem (Theorem~\ref{Thm:Dec_if_OR_has_CF}). 

This article is a condensed form of Chapter~4 of the author's PhD thesis \cite{Thesis}.

\section{Notation and auxiliary results}\label{Sec:NotationEtc}

\noindent We assume the reader is familiar with the fundamentals of formal language theory. In particular, an $\AFL$ (\textit{abstract family of languages}) is a class of languages closed under homomorphism, inverse homomorphism, intersection with regular languages, union, concatenation, and the Kleene star. For some background on this, and other topics in formal language theory, we refer the reader to standard books on the subject \cite{Harrison1978, Hopcroft1979, Berstel1985}. The paper also assumes familiarity with the basics of the theory of semigroup, monoid, and group presentations, which will be written as $\pres{Sgp}{A}{\cR}$, $\pres{Mon}{A}{\cR}$, and $\pres{Gp}{A}{\cR}$, respectively. For further background see e.g. \cite{Adian1966,Magnus1966, Lyndon1977,Campbell1995}. We also refer the reader to \cite{NybergBrodda2020b, NybergBrodda2021f}.

\subsection{Monoids, words, rewriting}

Let $A$ be a finite alphabet, and let $A^\ast$ denote the free monoid on $A$, with identity element denoted $\varepsilon$ or $1$, depending on the context. Let $A^+$ denote the free semigroup on $A$, i.e. $A^+ = A^\ast - \{ \varepsilon\}$. For $u, v \in A^\ast$, by $u \equiv v$ we mean that $u$ and $v$ are the same word. For $w \in A^\ast$, we let $|w|$ denote the \textit{length} of $w$, i.e. the number of letters in $w$. We have $|\varepsilon| = 0$. If $w \equiv a_1 a_2 \cdots a_n$ for $a_i \in A$, then we let $w^\trev$ denote the \textit{reverse} of $w$, i.e. the word $a_n a_{n-1} \cdots a_1$. For a language $L$, we define $L^\trev$ as the collection of all words $w^\trev$ where $w \in L$. We say that a class of languages $\cc$ is \textit{reversal-closed} if for every $L \in \cc$, we have $L^\trev \in \cc$. We say that the word $w \in A^\ast$ is \textit{self-overlap free} if it is empty, or else if it is non-empty and none of the proper non-empty prefixes of $w$ is also a proper non-empty suffix of $w$. Thus $xyxyy$ is self-overlap free, but $xyxyx$ is not. If the words $u, v \in A^\ast$ are equal in the monoid $M = \pres{Mon}{A}{\cR}$, then we denote this $u =_M v$. Finally, when we say that a monoid $M$ is generated by a finite set $A$, we mean that there exists a surjective homomorphism $\pi \colon A^\ast \to M$. In this case, $u =_M v$ will be used synonymously with $\pi(u) = \pi(v)$. 

We give some notation for rewriting systems. For an in-depth treatment and further explanations of the terminology, see e.g. \cite{Jantzen1988, Book1993}. A \textit{rewriting system} $\cR$ on $A$ is a subset of $A^\ast \times A^\ast$. An element of $\cR$ is called a \textit{rule}. The system $\cR$ induces several relations on $A^\ast$. We will write $u \xr{\cR} v$ if there exist $x, y \in A^\ast$ and a rule $(\ell, r) \in \cR$ such that $u \equiv x\ell y$ and $v \equiv xry$. We let $\xra{\cR}$ denote the reflexive and transitive closure of $\xr{\cR}$. We denote by $\lra{\cR}$ the symmetric, reflexive, and transitive closure of $\xr{\cR}$. The relation $\lra{\cR}$ defines the least congruence on $A^\ast$ containing $\cR$. For $X \subseteq A^\ast$, we let $\langle X \rangle_\cR$ denote the set of \textit{ancestors} of $X$, i.e. $\langle X \rangle_\cR = \{ w \in A^\ast \mid \exists x \in X \textnormal{ such that } w \xra{\cR} x \}$. The monoid $\pres{Mon}{A}{\cR}$ is identified with the quotient $A^\ast / \lra{\cR}$. For a rewriting system $\ct \subseteq A^\ast \times A^\ast$ and a monoid $M = \pres{Mon}{A}{\cR}$, we say that $\ct$ is $M$\textit{-equivariant} if for every rule $(u, v) \in \ct$, we have $u =_M v$. That is, $\ct$ is $M$-equivariant if and only if $\lra{\ct} \subseteq \lra{\cR}$.

A rewriting system $\cR \subseteq A^\ast \times A^\ast$ is said to be \textit{monadic} if $(u, v) \in \cR$ implies $|u| \geq |v|$ and $v \in A \cup \{ \varepsilon \}$. We say that $\cR$ is \textit{special} if $(u, v) \in \cR$ implies $v \equiv \varepsilon$. Every special system is monadic. Let $\cc$ be a class of languages. A monadic rewriting system $\cR$ is said to be $\cc$ if for every $a \in A \cup \{ \varepsilon \}$, the language $\{ u \mid (u, a) \in \cR \}$ is in $\cc$. Thus, we may speak of e.g. monadic $\cc$-rewriting systems or monadic context-free rewriting systems.

Let $M$ be a monoid with a finite generating set $A$. The (monoid) \textit{word problem of $M$ with respect to $A$} is defined as the language
\begin{equation}\label{Eq:WP_A^M_def}
\WP_A^M := \{ u \# v^\trev \mid u, v \in A^\ast, u =_M v\},
\end{equation}
where $\#$ is some fixed symbol not in $A$. For a class of languages $\cc$, we say that $M$ has $\cc$-word problem if $\WP_A^M$ is in $\cc$. If $\cc$ is closed under inverse homomorphism, then $M$ having $\cc$-word problem does not depend on the finite generating set $A$ chosen for $M$ (see e.g. \cite{Hoffmann2012}). The above definition \eqref{Eq:WP_A^M_def} is due to Duncan \& Gilman \cite[p.\ 522]{Duncan2004}. We will throughout this article use $A_\#$ as a short-hand for the set $A \cup \{ \# \}$. 

\subsection{Super-$\AFL$s}\label{Subsec:SuperAFL}

The theorems in this paper involve a special type of classes of languages, called \textit{super-$\AFL$s}. These were introduced by Greibach \cite{Greibach1970}, using \textit{nested iterated substitutions}. In \cite{NybergBrodda2021f}, the author gave an equivalent characterisation of super-$\AFL$s, which we shall use here. We begin with a simple definition. Recall that for a language $L$ and a rewriting system $\cR$, the language $\langle L \rangle_\cR$ denotes the language of ancestors of $L$ under $\cR$, i.e. the set of words which can be rewritten to some word in $L$. 

\begin{definition}
Let $\cc$ be a class of languages. Let $\cR \subseteq A^\ast \times A^\ast$ be a rewriting system. Then we say that $\cR$ is $\cc$\textit{-ancestry preserving} if for every $L \subseteq A^\ast$ with $L \in \cc$, we have $\langle L \rangle_\cR \in \cc$. If every monadic $\cc$-rewriting system is $\cc$-ancestry preserving, then we say that $\cc$ has the \textit{monadic ancestor property}.
\end{definition}

\begin{example}
If $\cR \subseteq A^\ast \times A^\ast$ is a monadic context-free rewriting system, and $L \subseteq A^\ast$ is a context-free language, then $\langle L \rangle_\cR$ is a context-free language \cite[Theorem~2.2]{Book1982b}. That is, if we let $\CF$ denote the class of context-free languages, then every monadic $\CF$-rewriting system is $\CF$-ancestry preserving; in other words, the class of context-free languages has the monadic ancestor property. 
\end{example}

\begin{definition}\label{Def:super-AFL}
Let $\cc$ be an $\AFL$. Then $\cc$ is said to be a \textit{super-$\AFL$} if $\cc$ has the monadic ancestor property.
\end{definition}

For example, the class $\CF$ of context-free languages forms a super-$\AFL$ by \cite[Theorem~1.2]{Kral1970}, and the class $\IND$ of indexed languages is also a super-$\AFL$ \cite{Aho1968, Engelfriet1985}. Both are closed under reversal. On the other hand, neither the classes $\REG$ nor $\DCF$ of regular resp. deterministic context-free languages are super-$\AFL$s. Indeed, if $\cc$ is a super-$\AFL$, then $\CF \subseteq \cc$, by \cite[Theorem~2.2]{Greibach1970}. For more examples and generalisations, we refer the reader to the so-called \textit{hyper-$\AFL$s} defined by Engelfriet \cite{Engelfriet1985}, all of which are super-$\AFL$s.

We remark that Greibach \cite{Greibach1970} originally defines super-$\AFL$s via ``nested iterated substitutions'', which behave similarly to taking ancestors under monadic rewriting systems. The author \cite[Proposition~2.2]{NybergBrodda2021f} has proved that the above definition, using rewriting systems, is equivalent to Greibach's, and so we will use her results about super-$\AFL$s without restriction. 

\subsection{Alternating products and bipartisan ancestry}
With general definitions taken care of, we will now turn to give some useful auxiliary language-theoretic results. 

We first define an operation (the alternating product) on certain languages, which mimics the operation of the free product of semigroups. This operation appears in \cite{NybergBrodda2021f}. Fix an alphabet $A$ and let $\#$ be a symbol not in $A$. Let $L \subseteq A^\ast \# A^\ast$ be any language. We say that $L$ is \textit{concatenation-closed} (with respect to $\#$) if 
\begin{equation}
u_1 \# v_1 \in L \textnormal{ and } u_2 \# v_2 \in L \implies u_1 u_2 \# v_2 v_1 \in L,
\end{equation}
where $u_1, v_1, u_2, v_2 \in A^\ast$. The word problem of any finitely generated monoid is always a concatenation-closed language. 

Let $X \colon \mathbb{N} \to \{ 1, 2 \}$ be a parametrisation such that either $X(2j) = 1$ and $X(2j+1) = 2$, or else $X(2j) = 2$ and $X(2j+1) = 1$, for all $ j \in \mathbb{N}$. A parametrisation $X$ of this form will be called \textit{standard}. Given two concatenation-closed languages in $A^\ast \# A^\ast$, we now present an operation for combining them. Let $L_1, L_2 \subseteq A^\ast \# A^\ast$ be concatenation-closed. Then the \textit{alternating product} $L_1 \star L_2$ of $L_1$ and $L_2$ is defined as the language consisting of all words of the form: 
\begin{equation}\label{Eq:Def_alternating}
u_1 u_2 \cdots u_k \# v_k \cdots v_2 v_1,
\end{equation}
where for all $1 \leq i \leq k$ we have $u_i \# v_i \in L_{X(i)}$, where $X$ is a standard parametrisation. In particular, $\# \in L_1 \star L_2$ if and only if $\# \in L_1$ or $\# \in L_2$. Alternating products are modelled on the (semigroup) free product, as the following example shows. 

\begin{example}
Let $L_1 = \{ a^n \# a^n \mid n \geq 0 \}$ and $L_2 = \{ b^n \# b^n \mid n \geq 0 \}$. Then, of course, $L_1 = \WP_{\{ a \}}^{\{ a \}^\ast}$ and $L_2 = \WP_{\{ b \}}^{\{ b \}^\ast}$. Now both languages $L_1$ and $L_2$ are concatenation-closed, and it is easy to see that we have
\[
L_1 \star L_2 = \{ a^{n_1} b^{n_2} \cdots  a^{n_k} \# a^{n_k} \cdots b^{n_2} a^{n_1} \mid k \geq 0; n_1, n_2, \dots, n_{k-1} \geq 1; n_k \geq 0 \}.
\]
Thus $L_1 \star L_2 = \{ w \# w^\trev \mid w \in \{ a, b \}^\ast \} = \WP_{\{a,b\}}^{\{a,b\}^\ast}$.
\end{example}

Using the monadic ancestor property, one can prove the following useful statement. 

\begin{proposition}[{\cite[Proposition~2.3]{NybergBrodda2021f}}]\label{Prop:Alternating_preserves}
Let $\cc$ be a super-$\AFL$. Let $L_1, L_2 \subseteq A^\ast \# A^\ast$ be concatenation-closed languages. Then $L_1, L_2 \in \cc \implies L_1 \star L_2 \in \cc$. 
\end{proposition}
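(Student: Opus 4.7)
The plan is to realise $L_1 \star L_2$ as the intersection of a regular language with the ancestor set of a finite seed under a carefully chosen monadic $\cc$-rewriting system, and then to invoke the monadic ancestor property of $\cc$ together with AFL closure under intersection with regular languages.

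Concretely, I would introduce two fresh marker letters $\sigma_1, \sigma_2 \notin A \cup \{\#\}$ and work over the extended alphabet $A' = A \cup \{\#, \sigma_1, \sigma_2\}$. Let $\cR$ consist of a \emph{parse} family $(u \# v, \sigma_i)$, one rule for each $u \# v \in L_i$ and $i \in \{1,2\}$, together with a \emph{wrap} family $(u \sigma_{3-i} v, \sigma_i)$, one rule for each $u \# v \in L_i$ and $i \in \{1,2\}$. Every right-hand side is a single letter of $A'$, and every left-hand side has length at least $1$, so $\cR$ is monadic. The language of left-hand sides reducing to $\sigma_i$ is $L_i \cup h_i(L_i)$, where $h_i$ is the homomorphism fixing $A$ and sending $\# \mapsto \sigma_{3-i}$; by AFL closure this lies in $\cc$, so $\cR$ is a monadic $\cc$-rewriting system.

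Take the seed $S = \{\sigma_1, \sigma_2\}$, which is finite (hence regular) and therefore in $\cc$, since every super-$\AFL$ contains $\REG$. The monadic ancestor property then yields $\langle S \rangle_\cR \in \cc$, and intersection with the regular language $A^\ast \# A^\ast$ preserves membership in $\cc$. It then suffices to verify the identity
\[
\langle S \rangle_\cR \cap A^\ast \# A^\ast \;=\; L_1 \star L_2.
\]
The inclusion $\supseteq$ is by induction on $k$ in \eqref{Eq:Def_alternating}: given $u_1 \cdots u_k \# v_k \cdots v_1 \in L_1 \star L_2$, apply the parse rule to the innermost block $u_k \# v_k \in L_{X(k)}$, then successive wrap rules from the inside outward, terminating at $\sigma_{X(1)} \in S$.

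The reverse inclusion is the main point to check carefully. Any $w \in A^\ast \# A^\ast$ contains no $\sigma_i$, so the first rewrite in any reducing sequence $w = w_0 \to w_1 \to \cdots \to w_n \in S$ must be a parse rule, which consumes the unique $\#$ and produces a word in $A^\ast \sigma_i A^\ast$. Thereafter only wrap rules can fire, and each such rule swaps the marker index $\sigma_i \leftrightarrow \sigma_{3-i}$. Reading off the successive $(u, v)$-pairs used extracts a decomposition of $w$ into alternating layers around $\#$ that witnesses $w \in L_1 \star L_2$. The main obstacle is tracking the invariant that every intermediate word contains exactly one occurrence of a symbol from $\{\#, \sigma_1, \sigma_2\}$ (so the two rule types never clash), and verifying that the strict alternation forced by the wrap rule matches the standard parametrisation required in \eqref{Eq:Def_alternating}.
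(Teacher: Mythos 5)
Your construction is correct, and it proves the proposition. The paper itself does not reprove this statement (it is quoted from \cite[Proposition~2.3]{NybergBrodda2021f}), but the route indicated there --- and the only one available --- is precisely the monadic ancestor property, which is what you use; so in spirit you are on the intended path. Your marker device $\sigma_1,\sigma_2$ is a clean way to force the strict alternation demanded by the standard parametrisation in \eqref{Eq:Def_alternating}: the parse/wrap dichotomy, together with the invariant that each intermediate word carries exactly one symbol from $\{\#,\sigma_1,\sigma_2\}$, really does force every reduction of a word of $A^\ast\#A^\ast$ to $S$ to consist of one parse step followed by index-swapping wrap steps, and reading these off gives a legal alternating decomposition; conversely the inside-out reduction handles the $\supseteq$ inclusion, including the degenerate blocks $\varepsilon\#\varepsilon$ and the case $w\equiv\#$. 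Two remarks. First, your argument never invokes the hypothesis that $L_1,L_2$ are concatenation-closed; it in fact shows that the language defined by \eqref{Eq:Def_alternating} lies in $\cc$ for \emph{arbitrary} $L_1,L_2\in\cc$, which is slightly more general than stated. The more economical system one would otherwise write down, namely $\{(u\#v,\#) \mid u\#v\in L_1\cup L_2\}$ over $A_\#$ with seed $L_1\cup L_2$, avoids the extra letters but captures non-alternating block decompositions, and it is exactly concatenation-closedness that lets one merge adjacent same-index blocks and recover $L_1\star L_2$; your markers trade that algebraic hypothesis for a larger alphabet. Second, a small bookkeeping point worth making explicit: the languages $L_i\cup h_i(L_i)$ of left-hand sides live over the extended alphabet $A'$, and $S=\{\sigma_1,\sigma_2\}$ lies in $\cc$ because every super-$\AFL$ contains $\CF\supseteq\REG$ \cite[Theorem~2.2]{Greibach1970}; with those lines filled in, the proof is complete.
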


We will require another operation with useful preservation properties, also introduced in \cite{NybergBrodda2021f}. Let $\cR_1, \cR_2 \subseteq A^\ast \times A^\ast$ be two rewriting systems. Let $L \subseteq A^\ast \# A^\ast$. Then we define the \textit{bipartisan $(\cR_1, \cR_2)$-ancestor} of $L$ as the language 
\begin{equation}\label{Eq:Bipartisan_ancestor_def}
L^{\cR_1, \cR_2} = \{ w_1 \# w_2 \mid \exists u_1 \# u_2 \in L \textnormal{ such that } w_i \in \langle u_i \rangle_{\cR_i} \textnormal{ for $i=1, 2$}\}.
\end{equation}

Bipartisan ancestors can, informally speaking, manipulate the left and the right side (of $\#$ in words in $L$) using $\cR_1$ resp. $\cR_2$, and these manipulations can be independent of one another. We give an example of this independence.

\begin{example}
Let $A = \{ a, b\}$, and let $L = \{ a \# a \}$. Let $\cR_1$ be the rewriting system with the rules $(b^n, a)$ for all $n \geq 1$. Then 
\[
L^{\cR_1, \cR_1} = \{ b^{n_1} \# b^{n_2} \mid n_1, n_2 \geq 1\} \cup \{ a\#a \}.
\]
In particular, we have $b^{n_1} \# b^{n_2} \in L^{\cR_1, \cR_1}$ even if $n_1 \neq n_2$.
\end{example}

\begin{proposition}[{\cite[Proposition~2.6]{NybergBrodda2021f}}]\label{Prop:Ancestor_preserves}
Let $\cc$ be a super-$\AFL$. Let $L \subseteq A^\ast \# A^\ast$, and let $\cR_1, \cR_2$ be monadic $\cc$-rewriting systems. Then $L \in \cc \implies L^{\cR_1, \cR_2} \in \cc$.
\end{proposition}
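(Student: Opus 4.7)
The plan is to reduce the statement to a single application of the monadic ancestor property by separating the left and right sides of $\#$ via an alphabet duplication. The obstacle is that the rewriting systems $\cR_1, \cR_2$ act on the same alphabet $A$, so naively forming $\cR_1 \cup \cR_2$ as a system on $A_\#$ would allow rules of $\cR_1$ to fire on letters occurring to the right of $\#$. I would resolve this by taking two disjoint copies $A_L = \{a_L : a \in A\}$ and $A_R = \{a_R : a \in A\}$, with the homomorphism $\phi \colon (A_L \cup A_R \cup \{\#\})^\ast \to A_\#^\ast$ defined by $\phi(a_L) = \phi(a_R) = a$ and $\phi(\#) = \#$.

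Step one: pull $L$ back to the doubled alphabet. Set
\[
L' := \phi^{-1}(L) \cap (A_L^\ast \, \# \, A_R^\ast),
\]
which lies in $\cc$ since $\cc$ is an $\AFL$ (closed under inverse homomorphism and intersection with regular languages). Step two: lift the rewriting systems. For each rule $(u, a) \in \cR_i$ (where $a \in A \cup \{\varepsilon\}$), add to $\cR_i'$ the rule $(u_L, a_L)$ if $i = 1$ and $(u_R, a_R)$ if $i = 2$, where $u_L, u_R$ denote the obvious relabellings. Then $\cR_1' \cup \cR_2'$ is a monadic rewriting system on $A_L \cup A_R \cup \{\#\}$, and for each right-hand letter $b$ the set $\{u : (u, b) \in \cR_1' \cup \cR_2'\}$ is the image of some $\{u : (u, a) \in \cR_i\}$ under a monomorphism-like relabelling, hence lies in $\cc$ by closure under homomorphism. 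Thus $\cR_1' \cup \cR_2'$ is a monadic $\cc$-rewriting system.

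Step three: apply the super-$\AFL$ property. By the monadic ancestor property of $\cc$, the language $\langle L' \rangle_{\cR_1' \cup \cR_2'}$ lies in $\cc$. The crucial observation here is that since rules of $\cR_1'$ involve only letters of $A_L$ and rules of $\cR_2'$ involve only letters of $A_R$ (and no rule involves $\#$), an ancestor of a word in $A_L^\ast \, \# \, A_R^\ast$ is still of that form, and its left and right halves evolve completely independently under $\cR_1'$ and $\cR_2'$ respectively. Step four: push forward. Applying $\phi$ yields a language in $\cc$ (closure under homomorphism). It remains to verify that $\phi(\langle L' \rangle_{\cR_1' \cup \cR_2'}) = L^{\cR_1, \cR_2}$: the $\subseteq$ inclusion is immediate from unpacking the definitions, and the $\supseteq$ inclusion follows by taking any $w_1 \# w_2 \in L^{\cR_1, \cR_2}$, relabelling $w_1, w_2$ to $w_1' \in A_L^\ast$ and $w_2' \in A_R^\ast$, rewriting independently via $\cR_1', \cR_2'$ to some lift $u_1' \# u_2' \in L'$ of the witness $u_1 \# u_2 \in L$, and noting $\phi(w_1' \# w_2') = w_1 \# w_2$. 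The main conceptual point — and what might need the most care in the write-up — is this independence of the two halves, which is what makes the single-system ancestor computation capture the two-sided bipartisan ancestor.
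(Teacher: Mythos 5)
Your reduction to a single monadic system on a doubled alphabet is a reasonable strategy, and most of the steps go through; but the step you call ``immediate'' is exactly where the argument breaks. It is \emph{not} true that every $(\cR_1' \cup \cR_2')$-ancestor of a word in $A_L^\ast \# A_R^\ast$ again has that form: monadic systems may contain deleting rules $(u,\varepsilon)$, and taking ancestors under such a rule amounts to \emph{inserting} $u$ at an arbitrary position, including on the wrong side of $\#$. Concretely, take $A=\{a,b\}$, $L=\{a\#a\}$, $\cR_1=\varnothing$ and $\cR_2=\{(b,\varepsilon)\}$. Then $(b_R,\varepsilon)\in\cR_2'$, so $b_R a_L \# a_R \xr{\cR_2'} a_L \# a_R \in L'$, whence $b_R a_L \# a_R \in \langle L'\rangle_{\cR_1'\cup\cR_2'}$ and $\phi(b_R a_L \# a_R)= ba\#a$. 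But $ba\#a \notin L^{\cR_1,\cR_2}$, since $\langle a\rangle_{\cR_1}=\{a\}$ and $ba \neq a$. So the inclusion $\phi(\langle L'\rangle_{\cR_1'\cup\cR_2'}) \subseteq L^{\cR_1,\cR_2}$, which you assert follows by unpacking definitions, fails in general.

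The repair is cheap. Intersect with the regular language $A_L^\ast\, \#\, A_R^\ast$ \emph{after} taking ancestors and prove instead that $L^{\cR_1,\cR_2} = \phi\bigl(\langle L'\rangle_{\cR_1'\cup\cR_2'} \cap (A_L^\ast\, \#\, A_R^\ast)\bigr)$; this stays in $\cc$ because every $\AFL$ is closed under intersection with regular languages. For a word that \emph{is} of the form $w_1'\#w_2'$ with $w_1'\in A_L^\ast$ and $w_2'\in A_R^\ast$, your independence argument is correct when run in the forward (rewriting) direction: every left-hand side of $\cR_1'$ (resp.\ $\cR_2'$) is a non-empty word over $A_L$ (resp.\ $A_R$), so it can only match inside the left (resp.\ right) half, no rule touches $\#$, and each half stays within its own copy of the alphabet; this yields both inclusions of the corrected identity. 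Two further small points for the write-up: the set of left-hand sides with right-hand side $\varepsilon$ in $\cR_1'\cup\cR_2'$ is a \emph{union} of the relabelled $\varepsilon$-rule languages of $\cR_1$ and $\cR_2$, so you also need closure of $\cc$ under union (which an $\AFL$ has); and the paper only cites this proposition from an external source, so there is no in-text proof to compare against --- with the intersection step added, your argument is complete.
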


Bipartisan ancestors are useful in describing the language theory of monoid (and group) free products. See \cite{NybergBrodda2021f} for further details. Alternating products and bipartisan ancestors are the language-theoretic tools we shall require for this article. We now turn to the main subject of the article. 

\section{Weak compression}\label{Sec:Weak_compression}

We present weak compression as it is defined in the survey \cite[\S3.1]{NybergBrodda2021b} by the author. This is an amalgamation of other approaches \cite{Lallement1974, Adian1978, Zhang1992e, Kobayashi2000, Gray2018}. 

Let $A$ be an alphabet. We say that a pair $(u, v)$ of words is \textit{sealed} by $w \in A^+$ if $u, v \in wA^\ast \cap A^\ast w$. If a pair is sealed by some word, then it is clearly sealed by a unique self-overlap free word $\alpha$. For example, $(xyxpxyx, xyxqxyx)$ is sealed by $xyx$, but also by the self-overlap free word $x$.

Let $M$ be the monoid defined by the presentation
\begin{equation}\label{Eq:Main_Monoid}
\pres{Mon}{A}{u_i = v_i \: (1 \leq i \leq p)}.
\end{equation}

\begin{definition}
We say that the monoid defined by \eqref{Eq:Main_Monoid} is \textit{weakly compressible} (with respect to $\alpha$) if there is some self-overlap free word $\alpha \in A^+$ such that for all $1 \leq i \leq p$, the pair $(u_i, v_i)$ is sealed by $\alpha$.
\end{definition}

If $M$ is not weakly compressible, then we say that $M$ is \textit{incompressible}. Any special monoid is incompressible by default (in particular groups are incompressible). 

Let $M$ be a weakly compressible monoid defined by \eqref{Eq:Main_Monoid}. We will assume $|A|>1$, for otherwise $M$ is a finite cyclic monoid, and all is trivial. A word $w \in A^\ast$ is called a \textit{left $\alpha$-conjugator} if $w \in \alpha A^\ast$. Let $\Sigma_\ast(\alpha)$ be the set of left $\alpha$-conjugators which contain exactly one occurrence of $\alpha$, i.e. $\Sigma_\ast(\alpha) = \alpha(A^\ast - A^\ast\alpha A^\ast)$. For ease of notation, we write $\Sigma_\ast$ instead of $\Sigma_\ast(\alpha)$. Clearly, $\Sigma_\ast$ is a countably infinite suffix code (as $|A|>1$), and $\Sigma_\ast^+$ is the set of all left $\alpha$-conjugators. Enumerate the words of $\Sigma_\ast$ as $\{ w_1, w_2, \dots \}$ and fix a set $\Gamma_\ast(\alpha) = \Gamma_\ast$ of symbols $\{ \gamma_{w_1}, \gamma_{w_2}, \dots \}$ in bijective correspondence with $\Sigma_\ast$ via the map $\varphi \colon w_i \mapsto \gamma_{w_i}$. As $\Sigma_\ast$ is a suffix code, we can extend $\varphi$ to an isomorphism of the free monoids $\Sigma_\ast^\ast$ and $\Gamma_\ast^\ast$. 

Every defining relation $u_i = v_i$ in \eqref{Eq:Main_Monoid} is sealed by $\alpha$, so $u_i, v_i \in \Sigma_\ast^\ast \alpha$. We factor $u_i, v_i$ uniquely over the suffix code $\Sigma_\ast$, yielding 
\begin{align}\label{Eq:Left_pieces}
u_i \equiv u_{i,1} u_{i,2} \cdots u_{i,m_i} \alpha, \quad \textnormal{and} \quad v_i \equiv v_{i,1} v_{i,2} \cdots v_{i,n_i} \alpha.
\end{align}
Any word $u_{i,j}$ or $v_{i,k}$ appearing in the factorisations \eqref{Eq:Left_pieces} for some $1 \leq i \leq p$ is called a \textit{left piece} of $M$. The set of all left pieces of $M$ is denoted $\Sigma(\alpha)$, which will be shortened to $\Sigma$. As $M$ is finitely presented, $\Sigma$ is a finite set of words. We let $\Gamma = \Gamma(\alpha)$ be the set $\varphi(\Sigma)$ of symbols from $\Gamma_\ast$. 

From the factorisation \eqref{Eq:Left_pieces}, we define a new presentation 
\begin{equation}\label{Eqref:L(M)_pres}
\pres{Mon}{\Gamma_\ast}{\varphi(u_{i,1} u_{i,2} \cdots u_{i,m_i}) = \varphi(v_{i,1} v_{i,2} \cdots v_{i,n_i}) \:\: (1 \leq i \leq p)}.
\end{equation}

\begin{definition}
The monoid defined by the presentation \eqref{Eqref:L(M)_pres} is called the \textit{extended left monoid} associated to the monoid $M$ (defined by \eqref{Eq:Main_Monoid}), and is denoted $L_\ast(M)$. The submonoid of $L_\ast(M)$ generated by $\Gamma$ is the \textit{left monoid} associated to $M$, and is denoted $L(M)$. 
\end{definition}

It is obvious from \eqref{Eqref:L(M)_pres} that $L_\ast(M) = \mathcal{F} \ast L(M)$, where $\ast$ denotes the monoid free product, and $\mathcal{F}$ is a free monoid of countably infinite rank, freely generated by $\Gamma_\ast - \Gamma$. We emphasise that it is always decidable to compute the presentation \eqref{Eqref:L(M)_pres} starting from the weakly compressible \eqref{Eq:Main_Monoid}, as it only requires computing with regular languages. Of course, $L(M)$ is finitely presented, with the same defining relations as in \eqref{Eqref:L(M)_pres}. Furthermore, the sum of the lengths of the defining relations in $L(M)$ is strictly shorter than the corresponding sum for $M$. In particular, setting $L^1(M) = L(M)$ and $L^i(M) = L(L^{i-1}(M))$ for $i > 1$, there exists some $n \geq 1$ such that $L^n(M)$ is incompressible.\footnote{The compression defined by Lallement \cite{Lallement1974} transforms $M$ into $L^n(M)$ in a single step, whereas that by Adian \& Oganesian \cite{Adian1978} or indeed Kobayashi \cite{Kobayashi2000} corresponds to transforming $M$ into $L(M)$.}

We give an example. One can easily verify that if 
\begin{equation}\label{Eq:M1}
M_1 = \pres{Mon}{x,y}{xyyxxxyxxyyxxxy = xy},
\end{equation}
then the defining relation of $M_1$ is sealed by $xy$. Factorising both sides of the defining relation, we find $\Sigma = \{ xyx, xyyxx\}$ and hence $\Gamma = \{ \gamma_{xyx}, \gamma_{xyyxx} \}$. Thus 
\begin{equation}\label{Eq:L(M1)}
L(M_1) = \pres{Mon}{\gamma_{xyx}, \gamma_{xyyxx}}{\gamma_{xyyxx}\gamma_{xyx}\gamma_{xyyxx} = 1}.
\end{equation}
This (special) monoid is incompressible. It is isomorphic to $\pres{Mon}{a,b}{aba=1}$, which is isomorphic to the infinite cyclic group $\mathbb{Z}$ by removing the redundant generator $b$.

We return to the general case, and present the normal form results from \cite{Lallement1974, Adian1978}. First, note that it is obvious that if two words $u, v \in A^\ast$ are equal in $M$, then $u$ contains an occurrence of the self-overlap free word $\alpha$ if and only if $v$ does; and furthermore, if neither $u$ nor $v$ contain $\alpha$, then $u =_M v$ if and only if $u \equiv v$. Second, given any $u \in A^\ast \alpha A^\ast$, there exist unique $u', u'' \in A^\ast - A^\ast \alpha A^\ast$ and $u^\dag \in \alpha A^\ast \cap A^\ast \alpha$ such that $u \equiv u' u^\dag u''$. We call such a factorisation the \textit{canonical form} of $u$, and $u^\dag$ is called the $\alpha$\textit{-part} of $u$. Write $u^\dag \equiv u_\ell^\dag \alpha$, where $u_\ell^\dag \in \Sigma^\ast$. The subword $\phi(u_\ell^\dag)$ of $\phi(u)$ is called the $\gamma$\textit{-part} of $u$. The following theorem is fundamental to weakly compressible monoids. 

\begin{theorem}[Adian \& Oganesian, Lallement]\label{Thm:NFT}
Let $M$ be a weakly compressible monoid defined by \eqref{Eq:Main_Monoid}. Let $u, v \in A^\ast \alpha A^\ast$ have canonical forms $u' u^\dag u''$ and $v' v^\dag v''$, respectively. Let $\gamma_u, \gamma_v$ be the $\gamma$-parts of $u$ and $v$, respectively. Then $u =_M v$ if and only if (1) $u' \equiv v'$ and $u'' \equiv v''$; and (2) $\gamma_u = \gamma_v$ in $L_\ast(M)$. Furthermore, (2) is equivalent to (2') $u^\dag =_M v^\dag$. 
\end{theorem}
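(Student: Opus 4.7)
The plan is to establish a lift-project correspondence between rewriting steps in $M$ (using the relations $u_i \leftrightarrow v_i$) and rewriting steps in $L_\ast(M)$ (using $\varphi(u_{i,1} \cdots u_{i,m_i}) \leftrightarrow \varphi(v_{i,1} \cdots v_{i,n_i})$). The essential geometric fact underlying everything is that since $\alpha$ is self-overlap free, any two occurrences of $\alpha$ in a word are either identical or entirely disjoint. Combined with each $u_i, v_i$ being sealed by $\alpha$, this forces every application of a defining relation to take place strictly within the $\alpha$-sealed region (between the first and last $\alpha$) of the word; in particular, the outer chunks $u'$ and $u''$ are untouched by any rewriting step.

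For the $(\Leftarrow)$ direction, assume (1) and (2), and take a derivation witnessing $\gamma_u = \gamma_v$ in $L_\ast(M)$. Each elementary step replaces some factor $\varphi(u_{i,1}) \cdots \varphi(u_{i,m_i})$ by $\varphi(v_{i,1}) \cdots \varphi(v_{i,n_i})$ (or vice versa). Applying $\varphi^{-1}$ to the $\Sigma_\ast^\ast$-factorisation of $u_\ell^\dag$, this lifts to a replacement of $u_{i,1} \cdots u_{i,m_i}$ by $v_{i,1} \cdots v_{i,n_i}$. Appending the necessary $\alpha$ recovers the $M$-relation $u_i \leftrightarrow v_i$: the trailing $\alpha$ is supplied either by the leading $\alpha$ of the next $\Sigma_\ast$-letter (since every element of $\Sigma_\ast$ begins with $\alpha$) or, when the matched factor ends at the right end of $u_\ell^\dag$, by the terminating $\alpha$ of $u^\dag$. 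Iterating, $u^\dag =_M v^\dag$, and combining with (1) yields $u = u' u^\dag u'' =_M v' v^\dag v'' = v$. Running this argument directly on $u^\dag, v^\dag$ also establishes (2) $\Rightarrow$ (2').

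For the $(\Rightarrow)$ direction, I would induct on the length of a derivation $u = w_0 \leftrightarrow w_1 \leftrightarrow \cdots \leftrightarrow w_n = v$ in $M$. A single step applies some rule $u_i \leftrightarrow v_i$; both endpoints of the matched occurrence are $\alpha$-positions, which by self-overlap freeness are pairwise disjoint and must lie within the $\alpha$-sealed region of $w_k$. Hence the outer chunks are preserved, giving (1) by induction. Simultaneously, the step projects under $\varphi$ to a single defining relation of $L_\ast(M)$ acting on $\gamma_{w_k}$: a short case analysis (on whether the matched trailing $\alpha$ is internal or terminal) shows the projected replacement is precisely $\varphi(u_{i,1}) \cdots \varphi(u_{i,m_i}) \leftrightarrow \varphi(v_{i,1}) \cdots \varphi(v_{i,n_i})$. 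By induction $\gamma_u = \gamma_v$ in $L_\ast(M)$, proving (2). Specialising to $u^\dag, v^\dag$ then gives (2') $\Rightarrow$ (2), which together with the converse above completes the equivalence of (2) and (2').

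The principal subtlety to handle carefully is the boundary case in both the lift and the projection: the matched factor's trailing $\alpha$ may coincide with the terminating $\alpha$ of $u^\dag$ rather than with the leading $\alpha$ of some subsequent $\Sigma_\ast$-letter, and these two situations must be treated in parallel. Self-overlap freeness of $\alpha$ is indispensable throughout, both for making the $\Sigma_\ast$-factorisation of $u_\ell^\dag$ well-defined as a factorisation over a suffix code, and for ensuring that the $\alpha$-positions within a word form an unambiguous, non-overlapping set which can be matched bijectively with the rewriting steps in $L_\ast(M)$. Without it, neither $\varphi$ nor the canonical form would yield a well-defined step-by-step correspondence.
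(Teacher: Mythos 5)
The paper does not actually prove this theorem: it is imported as a classical result of Lallement and Adian--Oganesian, with the reader referred to \cite[Theorem~3]{Adian1978}, so there is no in-paper argument to compare against. Your proof is correct and is, in substance, the standard one: the observation that self-overlap freeness forces any two occurrences of $\alpha$ to be disjoint is exactly the right engine, since it (a) confines every application of a defining relation to the $\alpha$-part $w^\dag$, (b) forces the matched occurrence of $u_i \equiv u_{i,1}\cdots u_{i,m_i}\alpha$ to align with the $\Sigma_\ast$-factorisation of $w^\dag$ (so that the step projects under $\varphi$ to a single relation of $L_\ast(M)$), and (c) makes the lift of an $L_\ast(M)$-step back to an $M$-step well defined. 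The one point you leave implicit and that is worth writing out is why the outer chunks of the \emph{canonical form} are genuinely preserved: after replacing $u_i$ by $v_i$ inside $w^\dag$ one must check that no new occurrence of $\alpha$ is created straddling the boundary between $w'$ (resp.\ $w''$) and the modified middle, which would change the canonical factorisation; this again follows from disjointness, since such an occurrence would properly overlap the leading (resp.\ trailing) $\alpha$ of the modified middle. With that and your flagged boundary case (trailing $\alpha$ of the matched factor being the terminal $\alpha$ of $w^\dag$ versus the leading $\alpha$ of the next $\Sigma_\ast$-letter) handled, the induction in both directions goes through, and the equivalence of (2) and (2$'$) follows by specialising to $u^\dag, v^\dag$, whose canonical forms have empty outer parts.
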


In particular, the word problem for $M$ is decidable if and only if the word problem for $L(M)$ is decidable. Furthermore, one can without much difficulty show that the left (right) divisibility problem for $M$ reduces to the word and left (right) divisibility problems in $L(M)$. In particular we can solve the word and divisibility problems in the monoid $M_1$ defined by \eqref{Eq:M1}. We refer the reader to \cite[Theorem~3]{Adian1978} for further details.

\section{Proof of Theorem~\ref{Thm:MAIN}}\label{Sec:Proof}

Let $M$ be a weakly compressible (with respect to some self-overlap free word $\alpha$) monoid defined by  \eqref{Eq:Main_Monoid}, and let $\varphi, \Sigma_\ast, \Gamma_\ast, \Sigma$, and $\Gamma$ be as in \S\ref{Sec:Weak_compression}. We will now prove that the language-theoretic properties of the monoid $M$ defined by \eqref{Eq:Main_Monoid} can be reduced to the properties of the left monoid $L(M)$. Let $\cc$ be a reversal-closed super-$\AFL$, which will remain fixed throughout this section. 

 To simplify some technical notation we shall sometimes consider $\Sigma$, rather than $\Gamma$, as a finite generating set for the monoid $L(M)$, as there exists a surjective homomorphism $\pi_\Sigma$ from $\Sigma^\ast$ onto $L(M)$ given by the composition $\pi_\Sigma = \pi_\Gamma \circ \varphi$, where $\pi_\Gamma \colon \Gamma^\ast \to L(M)$ is the natural surjective homomorphism. However, it is important to notice that if $u, v \in \Sigma^\ast$, then generally $\pi_\Sigma(u) = \pi_\Sigma(v)$ is very distinct from $u =_M v$, unlike what Theorem~\ref{Thm:NFT} may seem to suggest. Instead, using the canonical forms we find that we have 
\begin{equation}\label{Eq:Using_piSigma}
\pi_\Sigma(u) = \pi_\Sigma(v) \quad \iff \quad u \alpha =_M v\alpha.
\end{equation}

By the first remark preceding Theorem~\ref{Thm:NFT}, it follows that the language $\WP_A^M$ is a union $\aWP_A^M \cup \operatorname{W}_\alpha^-$ of the two disjoint languages 
\begin{align}
\aWP_A^M &= \{ w_1 \# w_2^\trev \mid w_1, w_2 \in A^\ast \alpha A^\ast \textnormal{ such that } w_1 =_M w_2 \}, \label{Def:awP} \\
\operatorname{W}_\alpha^- &= \{ w \# w^\trev \mid w \in A^\ast - A^\ast \alpha A^\ast \}.\label{Def:Walph}
\end{align}

The language $\operatorname{W}_\alpha^-$ defined by \eqref{Def:Walph} is the intersection of the context-free language $\WP_A^{A^\ast}$ with the regular language $(A^\ast - A^\ast \alpha A^\ast) \# (A^\ast - A^\ast \alpha A^\ast)^\trev$. In particular, $\operatorname{W}_\alpha^-$ is a context-free language. Any super-$\AFL$ contains the class of context-free languages \cite[Theorem~2.2]{Greibach1970}. It follows that as $\cc$ is a super-$\AFL$, we have that $\aWP_A^M \in \cc$ implies $\WP_A^M \in \cc$, as $\cc$ is closed under union. 

Having reduced the language-theoretic properties of $\WP_A^M$ to those of $\aWP_A^M$, we perform another reduction, which will yield Lemma~\ref{Lem:aaWP=>WP}. Let 
\begin{equation}\label{Eq:aaWP}
\aaWP_A^M = \{ w_1 \# w_2^\trev \mid w_1, w_2 \in \alpha A^\ast \cap A^\ast \alpha \textnormal{ such that } w_1 =_M w_2 \}.
\end{equation}
Of course, we have the strict inclusions 
\[
\aaWP_A^M \subset \aWP_A^M \subset \WP_A^M.
\]
Using this new language, we define the rewriting system 
\begin{equation}\label{Eq:R_alpha}
\cR_\alpha = \bigg\{ (w_1 \# w_2^\trev \to \#) \:\bigg|\: w_1 \# w_2^\trev \in \aaWP_A^M \cup \WP_A^{A^\ast} \bigg\}.
\end{equation}

This is a monadic rewriting system, and as $\cc$ is a super-$\AFL$ and $\aaWP_A^M \in \cc$, it follows that $\cR_\alpha$ is a $\cc$-rewriting system.

\begin{lemma}\label{Lem:aaWP=>WP}
If $\aaWP_A^M \in \cc$, then $\WP_A^M \in \cc$. 
\end{lemma}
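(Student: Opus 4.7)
The plan is to prove the set equality $\WP_A^M = \langle \{\#\} \rangle_{\cR_\alpha}$. Once this is established, since $\{\#\}$ is regular (hence in $\cc$, because every super-$\AFL$ contains $\CF$) and $\cR_\alpha$ has already been verified to be a monadic $\cc$-rewriting system, the monadic ancestor property built into the definition of a super-$\AFL$ immediately gives $\langle \{\#\} \rangle_{\cR_\alpha} \in \cc$, whence $\WP_A^M \in \cc$.

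The inclusion $\langle \{\#\} \rangle_{\cR_\alpha} \subseteq \WP_A^M$ is the easy direction. Every rule $w_1 \# w_2^\trev \to \#$ of $\cR_\alpha$ satisfies $w_1 =_M w_2$: this is the defining property of the $\aaWP_A^M$-rules, and is trivial for the $\WP_A^{A^\ast}$-rules, where in fact $w_1 \equiv w_2$. A single rewrite sends $u_0 w_1 \# w_2^\trev v_0^\trev$ to $u_0 \# v_0^\trev$ and preserves the property ``the two sides of $\#$ are $=_M$-equal'', since $u_0 =_M v_0$ together with $w_1 =_M w_2$ give $u_0 w_1 =_M v_0 w_2$. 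An induction on the number of rewrites needed to reach $\#$ then yields $u =_M v$ for every $u \# v^\trev \in \langle \{\#\} \rangle_{\cR_\alpha}$.

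For the reverse inclusion, take $u \# v^\trev \in \WP_A^M$. If $u$ contains no occurrence of $\alpha$, then by the remark preceding Theorem~\ref{Thm:NFT} neither does $v$, and in fact $u \equiv v$; the rule $u \# u^\trev \to \#$ from $\WP_A^{A^\ast}$ then rewrites $u \# v^\trev$ to $\#$ in a single step. Otherwise $u, v \in A^\ast \alpha A^\ast$, and Theorem~\ref{Thm:NFT} supplies canonical decompositions $u \equiv u_1 u^\dag u_2$ and $v \equiv u_1 v^\dag u_2$ (the outer factors agree as words, by the theorem) with $u^\dag, v^\dag \in \alpha A^\ast \cap A^\ast \alpha$ and $u^\dag =_M v^\dag$. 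The three-step reduction
\[
u_1 u^\dag u_2 \# u_2^\trev (v^\dag)^\trev u_1^\trev \xr{\cR_\alpha} u_1 u^\dag \# (v^\dag)^\trev u_1^\trev \xr{\cR_\alpha} u_1 \# u_1^\trev \xr{\cR_\alpha} \#,
\]
whose middle step applies the $\aaWP_A^M$-rule $u^\dag \# (v^\dag)^\trev \to \#$ and whose outer steps strip the $\alpha$-free flanks via $\WP_A^{A^\ast}$-rules, exhibits $u \# v^\trev$ as an ancestor of $\#$.

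There is no substantial obstacle beyond careful bookkeeping: Theorem~\ref{Thm:NFT} does the heavy lifting by guaranteeing that the $\alpha$-free flanks of $=_M$-equivalent words agree as raw words, isolating the genuine $M$-reasoning to the $\alpha$-parts. That is precisely what the $\aaWP_A^M$-component of $\cR_\alpha$ was designed to handle in one step, while the $\WP_A^{A^\ast}$-component was included in $\cR_\alpha$ precisely so that the flanks can be peeled off in separate, cheap free-monoid rewrite steps.
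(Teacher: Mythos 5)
Your proof is correct, and it rests on exactly the same ingredients as the paper's: the rewriting system $\cR_\alpha$, the canonical-form statement of Theorem~\ref{Thm:NFT} to isolate the $\alpha$-part, and the monadic ancestor property of the super-$\AFL$ $\cc$. The only difference is the target identity: you prove $\WP_A^M = \langle \{\#\} \rangle_{\cR_\alpha}$ outright, whereas the paper proves $\aWP_A^M = \langle \WP_A^{A^\ast} \rangle_{\cR_\alpha} \cap (A^\ast \alpha A^\ast \# A^\ast \alpha^\trev A^\ast)$ and then separately adjoins the context-free language $\operatorname{W}_\alpha^-$ of $\alpha$-free pairs by a union. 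Your formulation absorbs the $\alpha$-free case into the ancestor computation itself (the $\WP_A^{A^\ast}$-rules already present in $\cR_\alpha$ collapse $u \# u^\trev$ to $\#$ in one step), so you need neither the intersection with a regular language nor the final union; this is a mild streamlining rather than a new idea, and both versions check out. The one point worth making explicit in your easy direction is that every ancestor of $\#$ contains exactly one occurrence of $\#$ (each rule preserves the number of $\#$'s), so such a word really does have the form $u \# v^\trev$ with $u, v \in A^\ast$ and your induction applies; this is implicit in your write-up and harmless, but it is the reason the set equality is well posed.
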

\begin{proof}
Suppose $\aaWP_A^M \in \cc$. By the remarks following \eqref{Def:awP} and \eqref{Def:Walph}, it suffices to show that $\aWP_A^M \in \cc$. We claim that 
\begin{equation}\label{Eq:Equality_in_aaWP=>WP}
\aWP_A^M = \langle \WP_A^{A^\ast} \rangle_{\cR_\alpha} \cap (A^\ast \alpha A^\ast\# A^\ast \alpha^\trev A^\ast).
\end{equation}
This suffices to establish that $\aWP_A^M \in \cc$, as $\cc$ is closed under intersection with regular languages; $\WP_A^{A^\ast}$ is a context-free language and hence in $\cc$; and $\cc$ has the monadic ancestor property.

Note first that for every rule $(s, t) \in \cR_\alpha$ we have $s, t \in \WP_A^M$. Hence, if $w \in \langle \WP_A^{A^\ast} \rangle_{\cR_\alpha}$, then it is clear by induction on the number of $\cR_\alpha$-rewritings necessary to transform $w$ into an element of $\WP_A^{A^\ast}$ that $w \in \WP_A^M$. Hence the inclusion $\supseteq$ in \eqref{Eq:Equality_in_aaWP=>WP} is proved. 

Second, if $w \in \aWP_A^M$, then $w \equiv u \# v^\trev$ for some $u, v \in A^\ast \alpha A^\ast$ with $u =_M v$. Let $u' u^\dag u''$ and $v' v^\dag v''$ be the canonical forms of $u$ and $v$, respectively. By Theorem~\ref{Thm:NFT}, we have $u' \equiv v'$, $u'' \equiv v''$, and $u^\dag =_M v^\dag$. Hence 
\begin{equation*}
u' \# (v')^\trev, u'' \# (v'')^\trev \in \WP_A^{A^\ast}.
\end{equation*}
Furthermore, $u^\dag \# (v^\dag)^\trev \in \aaWP_A^M$. Thus from \eqref{Eq:R_alpha} we find
\[
(s, \#) \in \cR_\alpha \quad \textnormal{for all} \quad s \in \{ u' \# (v')^\trev, u'' \# (v'')^\trev, u^\dag \# (v^\dag)^\trev\}.
\]
It follows that 
\[
w \equiv u \# v^\trev \equiv u' u^\dag u'' \# (v'')^\trev (v^\dag)^\trev (v')^\trev \xra{\cR_\alpha} \#.
\]
Thus $w \in \langle \# \rangle_{\cR_\alpha} \subseteq \langle \WP_A^{A^\ast} \rangle_{\cR_\alpha}$. As $w$ was arbitrary, and as both $u$ and $v$ contain $\alpha$, we have proved the inclusion $\subseteq$ in \eqref{Eq:Equality_in_aaWP=>WP}. This establishes the desired equality \eqref{Eq:Equality_in_aaWP=>WP}.
\end{proof}

Thus, to prove (the hard direction of) the main theorem, it suffices to show that the properties of $\aaWP_A^M$ reduce to the properties of $L(M)$. This is non-trivial; a reduction to the properties of $L_\ast(M)$ is suggested by Theorem~\ref{Thm:NFT}, but $L_\ast(M)$ is not a finitely generated monoid, being a free product of $L(M)$ by an infinite rank free monoid $\mathcal{F}$. However, the word problem of $\mathcal{F}$ is, informally speaking, essentially the context-free language $\mathcal{L}_\mathcal{F} = \WP_A^{A^\ast} \cap (\Sigma_\ast - \Sigma) \# (\Sigma_\ast - \Sigma)^\trev$. We will show that $\aaWP_A^M$ is, up to technical details, describable as an alternating product of the word problem of $L(M)$ by this language $\mathcal{L}_\mathcal{F}$, together with an appropriate amount of ancestry.\footnote{The alternating products and ancestry defined and used by the author in \cite{NybergBrodda2021f} were, in fact, first developed by the author to deal with precisely the problem of describing the word problem of $L_\ast(M)$.} By the preservation results for alternating products and ancestry, this yields the proof strategy for reducing $\aaWP_A^M$ to $L(M)$. 

\begin{lemma}\label{Lem:If_L(M)_C_then_aaWP_C}
If $L(M)$ has word problem in $\cc$, then $\aaWP_A^M \in \cc$. 
\end{lemma}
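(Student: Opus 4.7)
The strategy is to express $\aaWP_A^M$ as a language in $\cc$ built from $\WP_\Gamma^{L(M)}$ by using the operations developed in \S\ref{Sec:NotationEtc}. The reduction is driven by Theorem~\ref{Thm:NFT}: for $w_1, w_2 \in \alpha A^\ast \cap A^\ast \alpha$, the equality $w_1 =_M w_2$ is controlled by the $\gamma$-parts inside $L_\ast(M) = L(M) \ast \mathcal{F}$, and the free product structure is exactly what the alternating product operation encodes.

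The first step is to translate $\WP_\Gamma^{L(M)}$ into a concatenation-closed language over the alphabet $A$. Viewing $\WP_\Gamma^{L(M)}$ as a language in $(A \cup \Gamma \cup \{\#\})^\ast$, define the monadic $\cc$-rewriting systems
\[
\cR_L = \{(s, \gamma_s) : s \in \Sigma\}, \qquad \cR_R = \{(s^\trev, \gamma_s) : s \in \Sigma\}
\]
(each left-hand side is a singleton, hence regular). By Proposition~\ref{Prop:Ancestor_preserves}, the bipartisan ancestor $(\WP_\Gamma^{L(M)})^{\cR_L, \cR_R}$ lies in $\cc$, and intersecting with the regular language $A^\ast \# A^\ast$ yields
\[
W_\Sigma := \{u \# v^\trev : u, v \in \Sigma^\ast,\ \varphi(u) =_{L(M)} \varphi(v)\} \in \cc;
\]
the suffix-code property of $\Sigma_\ast$ (and of its reverse) ensures that no spurious pre-images arise. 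Since $\varphi$ is a homomorphism, $W_\Sigma$ is concatenation-closed, as is the context-free language $\mathcal{L}_\mathcal{F} \in \cc$. Proposition~\ref{Prop:Alternating_preserves} then produces $W_\Sigma \star \mathcal{L}_\mathcal{F} \in \cc$, and applying the homomorphism $h_\alpha$ that fixes $A$ and sends $\# \mapsto \alpha \# \alpha^\trev$ preserves $\cc$-membership.

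It remains to prove the equality $\aaWP_A^M = h_\alpha(W_\Sigma \star \mathcal{L}_\mathcal{F})$. The inclusion $\supseteq$ is direct: a word on the right-hand side has the form $u_1 \cdots u_k \alpha \# \alpha^\trev v_k^\trev \cdots v_1^\trev$ where, factor by factor, $\varphi(u_i) = \varphi(v_i)$ holds in the appropriate free factor of $L(M) \ast \mathcal{F}$, so $\varphi(u_1 \cdots u_k) = \varphi(v_1 \cdots v_k)$ in $L_\ast(M)$ and Theorem~\ref{Thm:NFT} concludes.

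The main obstacle is the reverse inclusion. Given $w_1 \# w_2^\trev \in \aaWP_A^M$ with unique $\Sigma_\ast$-factorisations $w_1 = s_1 \cdots s_n \alpha$ and $w_2 = t_1 \cdots t_m \alpha$, one must exhibit an alternating decomposition of $s_1 \cdots s_n \# t_m^\trev \cdots t_1^\trev$ in $W_\Sigma \star \mathcal{L}_\mathcal{F}$. The natural maximal-block partitions of the two sides into $\Sigma^\ast$- and $(\Sigma_\ast - \Sigma)^\ast$-segments will generally have different alternation patterns, because $\Sigma$-blocks can reduce to the identity of $L(M)$ with no counterpart on the other side. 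The remedy is the free product normal form theorem for $L(M) \ast \mathcal{F}$: it yields a common refinement into aligned blocks, allowing empty slots (legitimate since $\varepsilon \# \varepsilon$ lies in both $W_\Sigma$ and $\mathcal{L}_\mathcal{F}$). Carrying out this refinement carefully, and checking that it is compatible with the reversal conventions on both sides of $\#$, is the combinatorial core of the proof.
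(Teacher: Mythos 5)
Your setup is sound and, up to one structural choice, follows the same route as the paper: you build the word problem of $L(M)$ spelled over $\Sigma$, take its alternating product with the palindrome language of $\mathcal{F}$, and conjugate by $\alpha$ via $\#\mapsto\alpha\#\alpha^{\trev}$. The passage from $\WP_\Gamma^{L(M)}$ to $W_\Sigma$ via the two finite monadic systems and Proposition~\ref{Prop:Ancestor_preserves} is correct, and in fact makes explicit a change of generating alphabet that the paper handles more casually; just note that the second factor must be $\cP_2=\{w\#w^{\trev}\mid w\in(\Sigma_\ast-\Sigma)^\ast\}$ (with stars), since the single-letter version is not concatenation-closed. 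The genuine divergence is this: the paper does \emph{not} claim that $\aaWP_A^M$ equals $\tau_\#\bigl(\WP_{\Sigma}^{L(M)}\star\cP_2\bigr)$ on the nose. It takes a further bipartisan ancestor of that language under the systems $I_\alpha=\{(w\alpha,\alpha)\mid w\in\Sigma^+,\ w\alpha=_M\alpha\}$ and $I_\alpha^r$, precisely to dispose of the ``trivial $\Sigma$-block'' phenomenon you identify: it first rewrites $u,v$ to \emph{reduced} forms, where Lemma~\ref{Lem:LastM_is_FP} makes the maximal-block decompositions align literally, and then recovers arbitrary $u,v$ as $(I_\alpha,I_\alpha^r)$-ancestors. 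The price of that route is Lemma~\ref{Lem:I_a_ancestrypreserving}, which needs the self-overlap-freeness of $\alpha$ and a $\loz$-encoding to make $I_\alpha$ monadic. Your route proposes to trade all of that machinery for a direct combinatorial claim about the alternating product with empty slots.

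That claim --- your reverse inclusion --- is exactly where the difficulty of the lemma lives, and you have named it rather than proved it. What is needed is: for $u,v\in\Sigma_\ast^\ast$ with $\varphi(u)=\varphi(v)$ in $L(M)\ast\mathcal{F}$, a \emph{single} standard parametrisation $X$ and factorisations $u=u_1\cdots u_k$, $v=v_1\cdots v_k$ with $u_i,v_i\in\Sigma_{X(i)}^\ast$ and $\varphi(u_i)=\varphi(v_i)$ in the relevant free factor. This is not a one-line appeal to the normal form theorem: one must (i) delete the trivial maximal $\Sigma$-blocks of each word and apply the normal form theorem to align the resulting reduced block sequences; (ii) observe that corresponding merged $(\Sigma_\ast-\Sigma)$-runs are literally the same letter string (here the code property of $\Sigma_\ast$ is used), so the two words' internal break-points within such a run can be unioned into a common refinement; and (iii) re-insert the deleted trivial blocks into the refined slots, pairing each with $\varepsilon$ or with a trivial block opposite it, padding with $\varepsilon\#\varepsilon$ slots to maintain strict alternation. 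I believe this does go through, so your variant is viable and arguably more elementary than the paper's ancestor argument --- but as written the proposal defers the entire load-bearing step, so it is not yet a proof.
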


Before giving the proof of this key lemma, we need some setup and auxiliary lemmas. We introduce some notation for convenience. Let $\Sigma_1 = \Sigma$, and let $\Sigma_2 = \Sigma_\ast - \Sigma$. Obviously $\Sigma_1 \cap \Sigma_2 = \varnothing$. Let $\Gamma_i = \varphi(\Sigma_i)$ for $i=1, 2$. As $\Sigma_i$ is a suffix code, it follows that $\varphi(\Sigma_i^\ast) = \varphi(\Sigma_i)^\ast = \Gamma_i^\ast$, for $i=1,2$. For further ease of notation, we write $M_1 = \langle \Gamma_1 \rangle_{L_\ast(M)} = L(M)$, and $M_2 = \langle \Gamma_2 \rangle_{L_\ast(M)} = \mathcal{F}$. Then $L_\ast(M) = M_1 \ast M_2$, where $\ast$ denotes the monoid free product. In this new notation, it follows directly from Theorem~\ref{Thm:NFT} that if $u, v \in \Sigma_1^\ast$, then 
\begin{equation}\label{Eq:SIGMA1ua=va_iff_phi(u)=phi(v)}
u\alpha =_M v\alpha \quad \iff \quad \varphi(u) =_{M_1} \varphi(v),
\end{equation}
as in this case $\varphi(u), \varphi(v) \in \Gamma_1^\ast$, and all defining relations of the presentation \eqref{Eqref:L(M)_pres} of $L_\ast(M)$ are pairs of words over the alphabet $\Gamma_1$. Analogously, if $u, v \in \Sigma_2^\ast$, then
\begin{equation}\label{Eq:SIGMA2ua=va_iff_u=v}
u\alpha =_M v\alpha \quad \iff \quad u \equiv v.
\end{equation}

We define two rewriting systems on $\Sigma_1^\ast$ resp. $(\Sigma_1^\trev)^\ast$. Let 
\begin{align}
I_\alpha &= \{ (w\alpha, \alpha) \mid w \in \Sigma_1^+ \colon w\alpha =_M \alpha \}, \label{Def:Ialpha}\\
I_\alpha^r &= \{ ( (w\alpha)^\trev, \alpha^\trev) \mid w \in \Sigma_1^+ \colon w\alpha =_M \alpha \}.\label{Def:IalphaREV}
\end{align}

Let now $u \in (\Sigma_1 \cup \Sigma_2)^\ast$ be any word, factorised uniquely as $u_0 u_1 \cdots u_n$ with $u_i \in \Sigma_{X(i)}^+$ for all $0 \leq i < n$ and $u_i \in \Sigma_{X(n)}^\ast$, where $X$ is a standard parametrisation. We say that (this factorisation) of $u$ is \textit{reduced} if $u \equiv \alpha$ or $u \equiv \varepsilon$; or if $u_i \alpha \neq_M \alpha$ for all $0 \leq i \leq n$. Obviously, any irreducible descendant of $u$ modulo $I_\alpha$ is reduced, and any reduced word is clearly irreducible modulo $I_\alpha$. Furthermore, as $I_\alpha$ is $M$-equivariant, if $u \xra{I_\alpha} u'$, then $u =_M u'$, so we conclude that every word $u \in (\Sigma_1 \cup \Sigma_2)^\ast$ is equal to some reduced word $u'$ (though this is generally not unique). Given any reduced $u'$, we can uniquely factorise it as a reduced factorisation $u_0'  u_1' \cdots u_k'$, where $u_i' \in \Sigma_{Y(i)}^\ast$ for $0 \leq i \leq k$, where $Y$ is a standard parametrisation. This factorisation $u_0'  u_1' \cdots u_k'$ of $u'$ is called the \textit{normal form} of the reduced word $u'$. 

\begin{lemma}\label{Lem:LastM_is_FP}
Let $u, v \in (\Sigma_1 \cup \Sigma_2)^\ast$. Let $u'$ resp. $v'$ be any reduced forms of $u$ resp. $v$, with normal forms $u' \equiv u_0' u_1' \cdots u_m'$ and $v' \equiv v_0' v_1' \cdots v_n'$ respectively. Then $u\alpha =_M v\alpha$ if and only if (1) $n=m$, and (2) $u_i', v_i' \in \Sigma^\ast_{X(i)}$ and $u_i' \alpha =_M v_i'\alpha$ for all $0 \leq i \leq n$, for some standard parametrisation $X$.
\end{lemma}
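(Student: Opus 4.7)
The plan is to reduce the statement to the standard normal form theorem for the monoid free product $L_\ast(M) = M_1 \ast M_2$, transferring everything through the isomorphism $\varphi$.

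First I would translate the hypothesis to a statement inside $L_\ast(M)$. Since $u, v \in \Sigma_\ast^\ast$, the words $u\alpha$ and $v\alpha$ lie in $\alpha A^\ast \cap A^\ast \alpha$, so their canonical-form prefixes and suffixes are empty and their $\gamma$-parts are precisely $\varphi(u)$ and $\varphi(v)$. By Theorem~\ref{Thm:NFT} we therefore have $u\alpha =_M v\alpha$ if and only if $\varphi(u) =_{L_\ast(M)} \varphi(v)$. Each rule of $I_\alpha$ is $M$-equivariant by definition of \eqref{Def:Ialpha}, so $u\alpha =_M u'\alpha$ and $v\alpha =_M v'\alpha$; hence $u\alpha =_M v\alpha$ is equivalent to $\varphi(u') =_{L_\ast(M)} \varphi(v')$. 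All subsequent work is thus inside $L_\ast(M) = M_1 \ast M_2$.

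Next, I would verify that the factorisation $\varphi(u') \equiv \varphi(u_0') \varphi(u_1') \cdots \varphi(u_m')$ is a \emph{normal form} in the usual sense for an element of $M_1 \ast M_2$: its factors lie alternately in the two factor monoids and are each non-identity in their factor. Alternation between $\Gamma_1^+$ and $\Gamma_2^+$ is immediate from the definition of the standard parametrisation $X$. For non-identity: if $X(i) = 2$ then $\varphi(u_i') \in \Gamma_2^+$ is a non-empty word in the free monoid $M_2 = \mathcal{F}$ and is therefore non-identity; if $X(i) = 1$, then $u_i' \alpha \neq_M \alpha$ by reducedness of $u'$, and applying \eqref{Eq:SIGMA1ua=va_iff_phi(u)=phi(v)} with second argument $\varepsilon$ yields $\varphi(u_i') \neq_{M_1} 1$. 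The same argument applied to $v'$ shows that $\varphi(v')$ is also in normal form.

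Now I would invoke the standard monoid free product normal form theorem: two normal form expressions represent the same element of $M_1 \ast M_2$ if and only if they have the same length, the same alternation pattern (i.e.\ the same standard parametrisation), and equal corresponding factors in the appropriate factor monoid. Applied to $\varphi(u') =_{L_\ast(M)} \varphi(v')$, this yields $m = n$, a common standard parametrisation $X$ with $u_i', v_i' \in \Sigma_{X(i)}^\ast$, and $\varphi(u_i') =_{M_{X(i)}} \varphi(v_i')$ for each $i$. Translating these factorwise equalities back via \eqref{Eq:SIGMA1ua=va_iff_phi(u)=phi(v)} (when $X(i) = 1$) or \eqref{Eq:SIGMA2ua=va_iff_u=v} (when $X(i) = 2$) gives precisely $u_i' \alpha =_M v_i' \alpha$, which is conclusion (2). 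The converse direction is the easy implication, obtained by running this chain of equivalences backwards. The main subtlety lies in making sure that the author's definition of a \emph{reduced} word matches the free-product notion of a normal form; the degenerate cases $u' \equiv \varepsilon$ and $u' \equiv \alpha$ (when $\alpha$ happens to lie in $\Sigma_1 \cup \Sigma_2$) need to be handled as short separate arguments, but both are immediate.
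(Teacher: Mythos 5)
Your proposal is correct and follows essentially the same route as the paper: the hard direction is handled by transferring to $L_\ast(M) = M_1 \ast M_2$ via Theorem~\ref{Thm:NFT}, checking that reducedness of $u'$ and $v'$ makes $\varphi(u')$ and $\varphi(v')$ genuine free-product normal forms, and then invoking the normal form theorem for monoid free products. The only (immaterial) divergence is in the easy direction, where the paper argues by a direct induction in $M$ using that each $u_{i+1}'$ begins with $\alpha$, while you run the free-product equivalences backwards; both work.
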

\begin{proof}
The ``if'' direction follows by induction on $n$, and the simple observation that if $u_i, v_i, u_{i+1}, v_{i+1} \in \Sigma^+$, then as $u_{i+1}$ and $v_{i+1}$ begin with $\alpha$ we have
\[
(u_i \alpha =_M v_i \alpha \:\:\textnormal{ and } \:\: u_{i+1} \alpha =_M v_{i+1} \alpha) \quad \implies \quad u_i u_{i+1} \alpha =_M v_i v_{i+1} \alpha.
\]

For the ``only if'' direction, by Theorem~\ref{Thm:NFT}, it follows that $u\alpha =_M v\alpha$ if and only if $\varphi(u') =_{M_1 \ast M_2} \varphi(v')$. Now $\varphi(u') \equiv \varphi(u_0') \varphi(u_1') \cdots \varphi(u'_m)$. As $\varphi(u_i') =_{M_1 \ast M_2} 1$ if and only $u_i' \alpha =_M \alpha$, it follows from the fact that $u'$ is reduced that $\varphi(u')$ is reduced with respect to the monoid free product $M_1 \ast M_2$, i.e. no non-empty subword of $\varphi(u')$ equals $1$ in $M_1 \ast M_2$. The analogous statement is true for $\varphi(v')$. Hence, as $\varphi(u') =_{M_1 \ast M_2} \varphi(v')$, it follows from the usual normal form lemma for monoid free products (see \cite[\S8.2]{Howie1995} or \cite[\S1]{NybergBrodda2021f}) that (1) $n=m$; and that (2) $\varphi(u_i'), \varphi(v_i') \in \Gamma_{X(i)}^\ast$ and $\varphi(u_i') =_{M_{X(i)}} \varphi(v_i')$ for $0 \leq i \leq n$, where $X$ is some standard parametrisation. As $u_i' \alpha =_M v_i' \alpha$ if and only $\varphi(u_i') =_{M_{X(i)}} \varphi(v_i')$, the result follows. 
\end{proof}

The rewriting systems $I_\alpha$ and $I_\alpha^r$ defined in \eqref{Def:Ialpha} and \eqref{Def:IalphaREV} are close to being monadic. We show that, via appropriate rational transductions, we can extend the monadic ancestor property to also apply to these rewriting systems, using the self-overlap free property of $\alpha$ in a non-trivial way. 

\begin{lemma}\label{Lem:I_a_ancestrypreserving}
If $L(M)$ has word problem in $\cc$, then $I_\alpha$ and $I_\alpha^r$ are $\cc$-ancestry preserving. 
\end{lemma}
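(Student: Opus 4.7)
We prove the statement for $I_\alpha$; the case of $I_\alpha^r$ will follow from the identity $\langle L \rangle_{I_\alpha^r} = (\langle L^\trev \rangle_{I_\alpha})^\trev$ together with the hypothesis that $\cc$ is reversal-closed. The plan for $I_\alpha$ is to realise it as a monadic $\cc$-rewriting system after marking each occurrence of $\alpha$ by a fresh letter $\ts$. Because $\alpha$ is self-overlap free, occurrences of $\alpha$ in any word are pairwise disjoint, so the function $\eta \colon A^\ast \to (A \cup \{\ts\})^\ast$ which replaces every occurrence of $\alpha$ by $\ts$ is well-defined, deterministic, and implementable by a finite-state transducer --- hence a rational (and in fact sequential) function. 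Let $\mu \colon (A \cup \{\ts\})^\ast \to A^\ast$ be the morphism sending $\ts \mapsto \alpha$ and $a \mapsto a$; then $\mu \circ \eta = \mathrm{id}_{A^\ast}$, and $\eta(A^\ast)$ is the regular language of those words over $A \cup \{\ts\}$ whose maximal $A$-factors contain no occurrence of $\alpha$. These two facts yield the identity $\eta^{-1}(X) = \mu(X \cap \eta(A^\ast))$ for every $X \subseteq (A \cup \{\ts\})^\ast$.

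The next step is to extract a $\cc$-description of the rule language. Set $R_\alpha^+ := \{ w \in \Sigma^+ : w\alpha =_M \alpha\} \subseteq A^\ast$. Since $L(M)$ has $\cc$-word problem, $R_\alpha^+ \in \cc$: starting from $\WP_\Gamma^{L(M)} \in \cc$, intersect with the regular language $\Gamma^\ast \{\#\}$ to obtain $\{u\# : u =_{L(M)} 1\}$, strip the $\#$ by a letter-erasing morphism, and finally apply the free-monoid morphism $\Gamma^\ast \to A^\ast$ sending $\gamma_w \mapsto w$. It follows that $\eta(R_\alpha^+)\ts \in \cc$, so the monadic rewriting system
\[
\hat{\cR} := \{ (\eta(w)\ts,\ts) : w \in R_\alpha^+\} \subseteq (A \cup \{\ts\})^\ast \times (A \cup \{\ts\})^\ast
\]
is a monadic $\cc$-rewriting system, and the monadic ancestor property of $\cc$ yields $\langle \eta(L) \rangle_{\hat{\cR}} \in \cc$ whenever $L \in \cc$.

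The heart of the proof is the identity
\[
\langle L \rangle_{I_\alpha} = \mu\bigl(\langle \eta(L) \rangle_{\hat{\cR}} \cap \eta(A^\ast)\bigr),
\]
which realises $\langle L \rangle_{I_\alpha}$ through three $\cc$-preserving operations. The inclusion $\subseteq$ follows from the boundary-free decomposition $\eta(xw\alpha y) = \eta(x)\eta(w)\ts\eta(y)$, valid precisely because $\alpha$ is self-overlap free: no $\alpha$-occurrence in $xw\alpha y$ can straddle any of the boundaries $x \mid w$, $w \mid \alpha$, or $\alpha \mid y$, as any such straddle would furnish a proper prefix of $\alpha$ coinciding with a proper suffix. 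Consequently every $I_\alpha$-step $xw\alpha y \to x\alpha y$ maps under $\eta$ to the $\hat{\cR}$-step $\eta(x)\eta(w)\ts\eta(y) \to \eta(x)\ts\eta(y)$. The reverse inclusion $\supseteq$ is the main obstacle; the plan is to prove it by inductively lifting a $\hat{\cR}$-reduction $\eta(u) = t_0 \to \cdots \to t_n = \eta(v) \in \eta(L)$ to an $I_\alpha$-reduction $u = u_0 \to \cdots \to u_n = v$ in $A^\ast$. Given $t_i = \eta(u_i)$ and a factorisation $t_i = B_1 \eta(w)\ts B_2$ witnessing a $\hat{\cR}$-step, the fact that prefixes and suffixes of members of $\eta(A^\ast)$ again lie in $\eta(A^\ast)$ forces $u_i = \mu(B_1) \cdot w \cdot \alpha \cdot \mu(B_2)$, so that the lifted $I_\alpha$-step must delete this $w$ in front of its accompanying $\alpha$. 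Injectivity of $\eta$ on $A^\ast$ finally gives $u_n = v$, completing the induction.
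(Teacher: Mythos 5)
Your proof is correct and follows essentially the same route as the paper's: your marker letter $\ts$ and the map $\eta$ are the paper's $\loz$ and $u \mapsto u_\loz$, your system $\hat{\cR}$ is its $I_\alpha^\loz$, and both arguments hinge on the self-overlap-freeness of $\alpha$ to make the marking unique and the induced rewriting system genuinely monadic before invoking the monadic ancestor property and transferring back along the substitution morphism. Your key identity $\langle L\rangle_{I_\alpha} = \mu\bigl(\langle\eta(L)\rangle_{\hat{\cR}}\cap\eta(A^\ast)\bigr)$ carries an explicit intersection with the regular set $\eta(A^\ast)$ that the paper's stated identity omits; this is harmless for $\cc$-membership, is arguably the safer formulation, and your lifting induction supplies exactly the verification the paper declares "easy" and omits.
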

\begin{proof}
Let $L \subseteq (\Sigma_1 \cup \Sigma_2)^\ast$ be an arbitrary language with $L \in \cc$. To prove that $I_\alpha$ is $\cc$-ancestry preserving, it suffices to show that $\langle L \rangle_{I_\alpha} \in \cc$. Let $I_\alpha^+$ be the set of left-hand sides of rules in $I_\alpha$. We claim $I_\alpha^+ \in \cc$. Indeed, $w \in \Sigma_1^\ast$ is such that $w\alpha =_M \alpha$ if and only if $\varphi(w) =_{L(M)} 1$ by \eqref{Eq:SIGMA1ua=va_iff_phi(u)=phi(v)}. It follows that 
\begin{equation}\label{Eq:I_alpha_+_is_transduction}
I_\alpha^+ = \varphi^{-1}(\IP_{\Gamma_1}^{M_1})\alpha - \{ \alpha \} = \varphi^{-1}(\IP_{\Gamma_1}^{M_1}) \alpha \cap \Sigma_1^+.
\end{equation}
As $L(M)$ has word problem in $\cc$, it follows that $\IP_{\Gamma_1}^{M_1} \in \cc$. As $\cc$ is an $\AFL$ and $\varphi$ is a homomorphism, it hence follows from \eqref{Eq:I_alpha_+_is_transduction} that $I_\alpha^+ \in \cc$. 

Let now $\loz$ be a new symbol. Let $A_\loz = A \cup \{ \loz \}$, and define the homomorphism $\sigma_\loz \colon A_\loz^\ast \to A^\ast$ by $a \mapsto a$ for all $a \in A$, and $\loz \mapsto \alpha$. Define a new rewriting system 
\[
I_\alpha^\loz = \{ (W \to \loz) \mid W \in \sigma_\loz^{-1}(I_\alpha^+) \cap (A_\loz^\ast - A_\loz^\ast \alpha A_\loz^\ast)\}.
\]
Clearly $I_\alpha^\loz$ is a monadic rewriting system. The language of left-hand sides of $\loz$ in $I_\alpha^\loz$ is the intersection $\sigma_\loz^{-1}(I_\alpha^+) \cap (A_\loz^\ast - A_\loz^\ast \alpha A_\loz^\ast)$, and as $I_\alpha^+ \in \cc$ it follows from the closure of $\cc$ under rational transduction that $I_\alpha^\loz$ is a $\cc$-rewriting system. 

Now, as $\alpha$ is self-overlap free, $\Sigma_1$ is a suffix code, so for any word $u \in A^\ast \alpha A^\ast$ we can \textit{uniquely} factor $u$ as $u_0 \alpha u_1 \alpha \cdots \alpha u_k$, where $u_i \in A^\ast - A^\ast \alpha A^\ast$ for $0 \leq i \leq k$. Hence there is exactly one word $u_\loz \in A_\loz^\ast$ with the properties that (1) $u_\loz$ does not contain $\alpha$; and (2) $\sigma_\loz(u_\loz) = u$. The uniqueness of this word depends on the fact that $\alpha$ is self-overlap free.\footnote{For example, if we take the word $\alpha \equiv xyx$, which has self-overlaps, then $\sigma_\loz(xy\loz) = \sigma_\loz(\loz yx) = xyxyx$.} Of course, for any $u \in A^\ast - A^\ast \alpha A^\ast$, there is also a unique such $u_\loz$, namely $u_\loz \equiv u$. Let $L_\loz$ be the language $\{ w_\loz \mid w \in L \}$. Then $\sigma_\loz(L_\loz) = L$. Furthermore, by the above uniqueness argument, we have 
\begin{equation}\label{Eq:sigma_loz_is_easy}
L_\loz = \sigma_\loz^{-1}(L) \cap (A_\loz^\ast - A_\loz^\ast \alpha A_\loz^\ast).
\end{equation}
Hence, from $L \in \cc$ we conclude $L_\loz \in \cc$. 

We now claim that $\langle L \rangle_{I_\alpha} = \sigma_\loz(\langle L_\loz \rangle_{I_\alpha^\loz})$. The right-hand side is the image under the homomorphism $\sigma_\loz$ of the ancestor of $L_\loz$ under the monadic $\cc$-rewriting system $I_\alpha^\loz$. As $\cc$ is a super-$\AFL$, the right-hand side is in $\cc$, and thus we would conclude that $I_\alpha$ is $\cc$-ancestry preserving. The desired equality is easy to prove. Indeed, it follows directly from the fact that if $w \in A^\ast$ and $u \in L$, then $w \xra{I_\alpha} u$ if and only if $w_\loz \xra{I_\alpha^\loz} u_\loz$. This latter fact is proved by an easy induction on the number of rules applied, and we omit this proof. We conclude that $I_\alpha$ is $\cc$-ancestry preserving. 

The case of $I_\alpha^r$ is symmetric, bearing in mind that (1) $\cc$ is reversal-closed; (2) $\alpha^\trev$ is self-overlap free if and only if $\alpha$ is self-overlap free, and (3) $\Sigma_1^\trev$ is a prefix code, rather than a suffix code (factorisation over $\Sigma_1^\trev$ is still unique). We omit the details. 
\end{proof}

One more step is needed before proving Lemma~\ref{Lem:If_L(M)_C_then_aaWP_C}. Let $\cP_2 = \{ w \# w^\trev \mid w \in \Sigma_2^\ast \}$, where the $\cP$ abbreviates ``palindrome''. By \eqref{Eq:SIGMA2ua=va_iff_u=v}, we have 
\begin{equation}\label{Eq:SIGMA2_WP_Palindrome}
\WP_A^M \cap (\Sigma_2^\ast \# (\Sigma_2^\trev)^\ast) = \cP_2.
\end{equation}
Clearly $\cP_2$ is a context-free language, as $\Sigma_2$ is regular, so $\cP_2 \in \cc$. Let $\tau_{\#} \colon A_\#^\ast \to A_\#^\ast$ be defined by $\tau_{\#}(a) = a$ for all $a \in A$, and $\tau_\#(\#) = \alpha \# \alpha^\trev$. We define the (rather complicated-looking) language
\begin{equation}\label{Def:Of_La_langugage}
\cl_\alpha = \big( \tau_\# \big( \WP_{\Sigma_1}^{L(M)} \star\: \cP_2 \big) \big)^{I_\alpha, I_\alpha^r}.
\end{equation}

We shall see in the below proof of Lemma~\ref{Lem:If_L(M)_C_then_aaWP_C} that $\cl_\alpha = \aaWP_A^M$. We first prove that $\cl_\alpha$ is a language encoding the language-theoretic properties of $L(M)$. 

\begin{lemma}\label{Lem:If_L(M)_is_C_then_La_is_C}
If $L(M)$ has word problem in $\cc$, then $\cl_\alpha \in \cc$. 
\end{lemma}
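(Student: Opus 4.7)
\emph{The plan} is to unpack the definition \eqref{Def:Of_La_langugage} of $\cl_\alpha$ from inside out, confirming that each construction preserves membership in $\cc$. For the innermost layer, since $\Sigma_1 = \Sigma$ is a finite generating set for $L(M)$ (via $\pi_\Sigma = \pi_\Gamma \circ \varphi$) and $\cc$ is closed under inverse homomorphism, having word problem in $\cc$ is independent of the finite generating set, so the hypothesis that $L(M)$ has word problem in $\cc$ yields $\WP_{\Sigma_1}^{L(M)} \in \cc$. For the second inner language, the identity $\cP_2 = \WP_A^{A^\ast} \cap (\Sigma_2^\ast \# (\Sigma_2^\trev)^\ast)$ exhibits $\cP_2$ as the intersection of the context-free palindrome language with a regular language, hence $\cP_2$ is itself context-free; every super-$\AFL$ contains the context-free languages, and so $\cP_2 \in \cc$.

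Both of these languages are immediately seen to be concatenation-closed in $A^\ast \# A^\ast$ (for $\cP_2$ this is because $w_1 w_2 \# w_2^\trev w_1^\trev \equiv w_1 w_2 \# (w_1 w_2)^\trev$). Proposition~\ref{Prop:Alternating_preserves} then yields $\WP_{\Sigma_1}^{L(M)} \star \cP_2 \in \cc$. Since $\tau_\#$ is a homomorphism and $\cc$ is closed under homomorphism, $\tau_\#\!\bigl(\WP_{\Sigma_1}^{L(M)} \star \cP_2\bigr) \in \cc$ as well.

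It remains to pass to the bipartisan $(I_\alpha, I_\alpha^r)$-ancestor, which is where the main obstacle lies. The natural tool is Proposition~\ref{Prop:Ancestor_preserves}, but it requires both rewriting systems to be \emph{monadic}---that is, with right-hand sides in $A \cup \{\varepsilon\}$---whereas by \eqref{Def:Ialpha} and \eqref{Def:IalphaREV} the right-hand sides of $I_\alpha$ and $I_\alpha^r$ are $\alpha$ and $\alpha^\trev$ respectively, of length $|\alpha| \geq 1$. The plan is to adapt the $\loz$-trick from the proof of Lemma~\ref{Lem:I_a_ancestrypreserving} to both halves of $\#$ simultaneously. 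Introduce fresh symbols $\loz_L, \loz_R$. Lift $I_\alpha$ to a monadic rewriting system $I_\alpha^{\loz_L}$ on $(A \cup \{\loz_L\})^\ast$ whose rules are $W \to \loz_L$, where $W$ is a word avoiding $\alpha$ that maps to $I_\alpha^+$ under the homomorphism $\loz_L \mapsto \alpha$; self-overlap-freeness of $\alpha$ guarantees the uniqueness of such $W$, and the same rational-transduction argument as in the proof of Lemma~\ref{Lem:I_a_ancestrypreserving} shows $I_\alpha^{\loz_L}$ is a monadic $\cc$-rewriting system. Symmetrically construct a monadic $\cc$-rewriting system $(I_\alpha^r)^{\loz_R}$ using self-overlap-freeness of $\alpha^\trev$. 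Lift the input $\tau_\#\!\bigl(\WP_{\Sigma_1}^{L(M)} \star \cP_2\bigr)$ to its unique $\{\loz_L, \loz_R\}$-representative (still in $\cc$ by the transduction identity analogous to \eqref{Eq:sigma_loz_is_easy}), apply Proposition~\ref{Prop:Ancestor_preserves} in the lifted setting, and project back via the homomorphism $\loz_L \mapsto \alpha$, $\loz_R \mapsto \alpha^\trev$ (fixing $A$ and $\#$). AFL-closure under homomorphism keeps us in $\cc$; and a short induction on rewriting steps, paralleling the one concluding the proof of Lemma~\ref{Lem:I_a_ancestrypreserving}, verifies that this projection equals $\cl_\alpha$ on the nose. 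The chief difficulty is thus squarely the non-monadicity of $I_\alpha$ and $I_\alpha^r$, which the two-sided $\loz$-trick circumvents by trading length on the right-hand side for a richer alphabet on which the rewriting system genuinely is monadic.
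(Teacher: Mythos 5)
Your proof is correct and follows the same overall decomposition as the paper's: $\WP_{\Sigma_1}^{L(M)} \in \cc$ by invariance under change of generating set, $\cP_2 \in \cc$ because it is context-free and every super-$\AFL$ contains $\CF$, both languages are concatenation-closed so Proposition~\ref{Prop:Alternating_preserves} and closure under homomorphism handle $\tau_\#(\WP_{\Sigma_1}^{L(M)} \star \cP_2)$, and the only real work is the bipartisan ancestor. The one place you diverge is in how you justify that last step. The paper simply cites Lemma~\ref{Lem:I_a_ancestrypreserving} (that $I_\alpha$ and $I_\alpha^r$ are $\cc$-ancestry preserving) and then invokes Proposition~\ref{Prop:Ancestor_preserves}, even though that proposition is stated for \emph{monadic} $\cc$-rewriting systems and $I_\alpha$, $I_\alpha^r$ are not monadic; the author is implicitly using that the bipartisan construction only needs ancestry preservation on each side. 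You instead notice this mismatch and close it by running the $\loz$-trick bipartisanly: two fresh symbols $\loz_L, \loz_R$, genuinely monadic lifted systems, a lift of the input language via inverse homomorphism and intersection with a regular language, Proposition~\ref{Prop:Ancestor_preserves} applied as literally stated, and projection back. This is a legitimate and arguably more careful route to the same conclusion; its only cost is that it essentially re-proves the content of Lemma~\ref{Lem:I_a_ancestrypreserving} inline in a two-sided form, including the induction on rewriting steps (which, as in the paper, relies on self-overlap-freeness of $\alpha$ and $\alpha^\trev$ to keep the $\loz$-representatives closed under reverse rewriting). No gap.
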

\begin{proof}
If $L(M)$ has word problem in $\cc$, we have $\WP_{\Sigma_1}^{L(M)} \in \cc$ (cf. also the remark preceding \eqref{Eq:Using_piSigma}). This is a concatenation-closed language, as it is a word problem. Furthermore, $\cP_2$ is a context-free language and is easily checked to be concatenation-closed. Hence by Proposition~\ref{Prop:Alternating_preserves}, the alternating product $ \WP_{\Sigma_1}^{L(M)} \star\: \cP_2$ is in $\cc$, as is its image under the homomorphism $\tau_\#$. By Lemma~\ref{Lem:I_a_ancestrypreserving}, $I_\alpha$ and $I_\alpha^r$ are $\cc$-ancestry preserving, so by Proposition~\ref{Prop:Ancestor_preserves}, we have $\cl_\alpha \in \cc$.
\end{proof}

\begin{proof}[Proof of Lemma~\ref{Lem:If_L(M)_C_then_aaWP_C}]
It suffices, by Lemma~\ref{Lem:If_L(M)_is_C_then_La_is_C}, to prove that $\aaWP_A^M = \cl_\alpha$. We prove the inclusions one at a time. 

$(\subseteq)$. Let $w \in \aaWP_A^M$ be arbitrary. Then we can write $w \equiv u\alpha \# (v\alpha)^\trev$ with $u, v \in (\Sigma_1 \cup \Sigma_2)^\ast$ and $u \alpha =_M v \alpha$. Let $u', v'$ be normal forms of $u$ resp. $v$ such that  $u \xra{I_\alpha} u'$ and $v \xra{I_alpha} v'$. Then by Lemma~\ref{Lem:LastM_is_FP} we can write
\[
u' \equiv u_0' u_1' \cdots u_m' \quad \textnormal{and} \quad v' \equiv v_0' v_1' \cdots v_n',
\]
with have $u_i', v_i' \in \Sigma^\ast_{X(i)}$ and $u_i' \alpha =_M v_i'\alpha$ for all $0 \leq i \leq n$, for some standard parametrisation $X$. Let $X$ be such a parametrisation. Let
\begin{equation}\label{Eq:w_is_alternating_prod}
w' \equiv u_1' u_2' \cdots u_n' \# (v_n')^\trev \cdots (v_2')^\trev (v_1')^\trev.
\end{equation}

Now for every $0 \leq i \leq n$, we have $u_i' \alpha =_M v_i' \alpha$ if and only if $\varphi(u_i') =_{M_{X(i)}} \varphi(v_i')$. When $X(i) = 1$, then by \eqref{Eq:Using_piSigma} and \eqref{Eq:SIGMA1ua=va_iff_phi(u)=phi(v)} it follows that $\pi_{\Sigma_1}(u_i') = \pi_{\Sigma_1}(v_i')$, so $u_i' \# (v_i')^{\trev} \in \WP_{\Sigma_1}^{L(M)}$.  When $X(i) = 2$, then by \eqref{Eq:SIGMA2ua=va_iff_u=v} we have $u_i' \equiv v_i'$, so $u_i' \# (v_i')^{\trev} \in \cP_2$. It follows that the right-hand side of \eqref{Eq:w_is_alternating_prod} is an element of the alternating product of $\WP_{\Sigma_1}^{L(M)}$ by $\cP_2$, and hence so too is $w'$. Let $\mathcal{Q} = \WP_{\Sigma_1}^{L(M)} \star \cP_2$, i.e. we just proved $w' \in \mathcal{Q}$. Let
\begin{equation}\label{Eq:w''_is_image_of_w'}
w'' \equiv u_1' u_2' \cdots u_n' \alpha \# \alpha^\trev (v_n')^\trev \cdots (v_2')^\trev (v_1')^\trev \equiv u' \# (v')^\trev.
\end{equation}
Then $w'' \equiv \tau_\#(w')$. We have $u \xra{I_\alpha} u'$, so of course $u\alpha \xra{I_\alpha} u'\alpha$. As $v \xra{I_\alpha} v'$, we have $v^\trev \xra{I_\alpha^r} (v')^\trev$ and so also $\alpha^\trev v^\trev \xra{I_\alpha^r} \alpha^\trev (v')^\trev$. Hence, by the definition of $(I_\alpha, I_\alpha^r)$-ancestors, we have 
\begin{equation}\label{Eq:u'v'_is_ancestor}
u \alpha \# \alpha^\trev v^\trev \in (\tau_\#(\mathcal{Q}))^{I_\alpha, I_\alpha^r}.
\end{equation}
But $u \alpha \# \alpha^\trev v^\trev$ is just $w$; and the right-hand side of \eqref{Eq:u'v'_is_ancestor} is $\cl_\alpha$, so we are done. 

$(\supseteq)$ This direction is straightforward, but somewhat lengthy; it does not use any ideas not present in the proof of $(\subseteq)$ besides the $M$-invariance of $I_\alpha$. The proof is therefore omitted (but is detailed in the author's Ph.D. thesis \cite[Lemma~4.2.13]{Thesis}).
\end{proof}

By combining Lemma~\ref{Lem:aaWP=>WP} and Lemma~\ref{Lem:If_L(M)_C_then_aaWP_C}, we conclude that if $L(M)$ has word problem in $\cc$, then $M$ has word problem in $\cc$. This is the difficult direction of the proof of Theorem~\ref{Thm:MAIN}. It remains to prove the converse:

\begin{lemma}\label{Lem:Converse_of_main}
If $M$ has word problem in $\cc$, then $L(M)$ has word problem in $\cc$.
\end{lemma}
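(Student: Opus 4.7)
The plan is to realise $\WP_\Gamma^{L(M)}$ as the image of $\WP_A^M$ under AFL operations: inverse homomorphism, intersection with a regular language, and homomorphism. Since $\cc$ is an $\AFL$, closure under these three suffices; in fact, neither the super-$\AFL$ hypothesis nor reversal-closure of $\cc$ is required for this direction. The algebraic input is \eqref{Eq:SIGMA1ua=va_iff_phi(u)=phi(v)}: for $u, v \in \Sigma^\ast$, we have $\varphi(u) =_{L(M)} \varphi(v)$ if and only if $u\alpha =_M v\alpha$. Writing $\psi \colon \Gamma^\ast \to \Sigma^\ast$ for the evaluation homomorphism $\gamma_\sigma \mapsto \sigma$ (so that $\varphi \circ \psi$ restricts to the identity on $\Gamma^\ast$), it follows that $u \# v^\trev \in \WP_\Gamma^{L(M)}$ if and only if $\psi(u)\alpha =_M \psi(v)\alpha$.

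To extract this condition from $\WP_A^M$, I would introduce a disjoint copy $\bar\Gamma = \{\bar\gamma_\sigma : \sigma \in \Sigma\}$ of $\Gamma$ and two homomorphisms from $(\Gamma \cup \bar\Gamma \cup \{\#\})^\ast$: the \emph{evaluator} $\theta$, sending $\gamma_\sigma \mapsto \sigma$, $\bar\gamma_\sigma \mapsto \sigma^\trev$, and $\# \mapsto \alpha \# \alpha^\trev$; and the \emph{un-bar} $\eta$, sending both $\gamma_\sigma$ and $\bar\gamma_\sigma$ to $\gamma_\sigma$, and fixing $\#$. For $v = \gamma_{\sigma_1}\cdots\gamma_{\sigma_k} \in \Gamma^\ast$ put $\bar v := \bar\gamma_{\sigma_1}\cdots\bar\gamma_{\sigma_k}$; then $\theta(\bar v^\trev) = \sigma_k^\trev \cdots \sigma_1^\trev = \psi(v)^\trev$, so for all $u, v \in \Gamma^\ast$,
\[
\theta(u \,\#\, \bar v^\trev) \;=\; \psi(u)\alpha \,\#\, (\psi(v)\alpha)^\trev.
\]
Consequently $\theta(u \,\#\, \bar v^\trev) \in \WP_A^M$ if and only if $u =_{L(M)} v$, and a routine verification yields
\[
\eta\bigl(\theta^{-1}(\WP_A^M) \,\cap\, (\Gamma^\ast \# \bar\Gamma^\ast)\bigr) \;=\; \WP_\Gamma^{L(M)},
\]
which therefore lies in $\cc$.

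The main obstacle is the mismatch between word-reversal in the definition of the word problem and the letter-to-word substitution $\gamma_\sigma \mapsto \sigma$: no single monoid homomorphism $\Gamma_\#^\ast \to A_\#^\ast$ can produce $\psi(u)\alpha$ on the left and $(\psi(v)\alpha)^\trev$ on the right of $\#$ simultaneously, because reversal on $\Gamma^\ast$ reverses only the top-level letters and not the interior of each $\sigma \in \Sigma$. The barred alphabet sidesteps this by supplying two substitution rules---one plain, one with built-in per-letter reversal---to be applied on the two sides of $\#$, with the intersection against $\Gamma^\ast \# \bar\Gamma^\ast$ enforcing the choice of rule. Once this bookkeeping is in place, the reduction is immediate.
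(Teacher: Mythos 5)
Your proof is correct and follows essentially the same route as the paper: realise the word problem of $L(M)$ as a rational transduction of $\WP_A^M$, with the key step being the homomorphism sending $\# \mapsto \alpha\#\alpha^\trev$ followed by intersection with a regular language, justified by \eqref{Eq:Using_piSigma}. The only difference is that the paper works with $\Sigma$ as the generating set for $L(M)$ (so produces $\WP_{\Sigma}^{L(M)} = \tau_\#^{-1}\big(\WP_A^M \cap \Sigma^\ast\alpha\#\alpha^\trev(\Sigma^\trev)^\ast\big)$ and leaves the passage to $\WP_\Gamma^{L(M)}$ implicit), whereas your barred alphabet $\bar\Gamma$ makes that last bookkeeping step --- reconciling reversal over $\Gamma$ with reversal over $A$ --- fully explicit.
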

\begin{proof}
Indeed, as before let the homomorphism $\tau_\# \colon A_\#^\ast \to A_\#^\ast$ be defined by $\tau_{\#}(a) = a$ for all $a \in A$, and $\tau_\#(\#) = \alpha \# \alpha^\trev$. Then it is easy to check, using \eqref{Eq:Using_piSigma}, that 
\begin{equation}\label{Eq:tau(WP)=WP_cap}
\tau_\#\left( \WP_{\Sigma_1}^{L(M)}\right)  = \WP_A^M \cap \big(\Sigma_1^\ast \alpha \# \alpha^\trev (\Sigma_1^\trev)^\ast \big).
\end{equation}
It is clear that $\tau_\#$ is injective on languages $L \subseteq A^\ast \alpha \# \alpha^\trev A^\ast$, i.e. that for any such language we have $\tau_\#^{-1} \circ \tau_\#(L) = L$. Hence, applying $\tau_\#^{-1}$ to both sides in \eqref{Eq:tau(WP)=WP_cap}, we find that $\WP_{\Sigma_1}^{L(M)}$ is a rational transduction of $\WP_A^M$. As $\cc$ is closed under rational transductions, it follows that $L(M)$ has word problem in $\cc$.
\end{proof}

This completes the proof of Theorem~\ref{Thm:MAIN}.

\section{Corollaries of Theorem~\ref{Thm:MAIN}}\label{Sec:Context-free}

\noindent In this section, we present some corollaries of Theorem~\ref{Thm:MAIN}. In particular, we will show that it is decidable whether a one-relation monoid containing a non-trivial idempotent has context-free word problem (Theorem~\ref{Thm:Dec_if_OR_has_CF}). This answers a generalisation of a question first asked by Zhang in 1992, which was answered affirmatively by the author in \cite{NybergBrodda2020b}. We begin by applying the result to the rational subset membership problem for monoids. 

\subsection{Rational subset membership problem}

Throughout this section we will fix a weakly compressible monoid $M$, defined by the presentation \eqref{Eq:Main_Monoid}, which is compressible with respect to $\alpha$. As the class of context-free (resp. indexed) languages is a super-$\AFL$ closed under reversal, it of course follows from Theorem~\ref{Thm:MAIN} that: $M$ has context-free (resp. indexed) word problem if and only if $L(M)$ does.

One of the direct consequences for a monoid having context-free word problem is decidability of its \textit{rational subset membership problem}. Recall that for a finitely generated monoid $M$, generated by a finite set $A$ and with associated surjective homomorphism $\pi \colon A^\ast \to M$, this decision problem asks: given a regular language $R \subseteq A^\ast$ and a word $w \in A^\ast$, can one decide whether $\pi(w) \in \pi(R)$? The rational subset membership problem clearly specialises to the submonoid membership problem, the divisibility problems, and the word problem for $M$. It is easy to check that any context-free monoid has decidable rational subset membership problem; indeed, if $\pi(w) \in \pi(R)$ if and only if $w$ is equal to some word in $R$, i.e. if and only if $w \in \WP_A^M / \# R$, where $/$ denotes the right quotient. As $\# R$ is a regular language, and $\WP_A^M$ is a context-free language, the quotient is a context-free language; and membership in context-free languages is well-known to be (uniformly) decidable (cf. also \cite[Theorem~3.5]{NybergBrodda2020b}). We conclude: 

\begin{corollary}\label{Cor:CF_L(M)_has_dec_RSMP}
Let $M$ be a weakly compressible monoid. If $L(M)$ has context-free word problem, then the rational subset membership problem for $M$ is decidable. 
\end{corollary}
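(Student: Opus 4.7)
The plan is to combine Theorem~\ref{Thm:MAIN} with a standard right-quotient reduction from rational subset membership to word-problem membership. Since the class $\CF$ of context-free languages is a reversal-closed super-$\AFL$, the hypothesis that $L(M)$ has context-free word problem, together with Theorem~\ref{Thm:MAIN}, yields $\WP_A^M \in \CF$ for any finite generating set $A$ of $M$ (with associated surjection $\pi\colon A^\ast \to M$).

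Next, I would encode the rational subset membership question as a context-free language membership question. Given a regular $R \subseteq A^\ast$ and an input word $w \in A^\ast$, the query $\pi(w) \in \pi(R)$ holds precisely when there exists $r \in R$ with $w =_M r$, i.e.\ when $w \# r^\trev \in \WP_A^M$ for some $r \in R$. Since $R^\trev$ is regular, this is equivalent to $w \in \WP_A^M / (\# R^\trev)$, where $/$ denotes the right quotient. Because $\CF$ is closed under right quotient by a regular language, the quotient language lies in $\CF$.

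Finally, membership in a fixed context-free language is decidable (for example by applying a CYK-style algorithm to a grammar for the quotient), which completes the decision procedure. The only subtlety worth flagging is effectiveness: one must produce an explicit grammar for $\WP_A^M$, rather than merely know that one exists. This is where I anticipate the main (but minor) technical obstacle; however, the constructions used in the proof of Theorem~\ref{Thm:MAIN} — the alternating product and bipartisan ancestor of Propositions~\ref{Prop:Alternating_preserves} and \ref{Prop:Ancestor_preserves}, together with the monadic $\cc$-rewriting system $\cR_\alpha$ — are each effective given an effective description of $\WP_{\Sigma_1}^{L(M)}$, so a grammar for $\WP_A^M$ is algorithmically constructible, and the rational subset membership problem for $M$ is indeed decidable.
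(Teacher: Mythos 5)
Your proposal is correct and follows essentially the same route as the paper: apply Theorem~\ref{Thm:MAIN} with $\cc = \CF$ to get $\WP_A^M \in \CF$, then reduce rational subset membership to membership in the right quotient of $\WP_A^M$ by the regular language $\# R^\trev$ (your inclusion of the reversal is in fact slightly more careful than the paper's notation), and conclude by decidability of membership in context-free languages. Your remark on effectiveness is a sensible point that the paper handles by citation rather than by re-examining the constructions, but it does not change the argument.
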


We do not know whether the statement of Corollary~\ref{Cor:CF_L(M)_has_dec_RSMP} holds when the rational subset membership problem is replaced by the submonoid membership problem, but conjecture this to be the case.

\begin{example}
Let $n, k, \beta_1, \dots, \beta_n$ be natural numbers such that $n > 1$, $1 \leq k \leq n$, and $\beta_i \geq 1$ for all $1 \leq i \leq n$. Let $\Pi = \Pi(n, k, \{ \beta_i \}_{i=1}^n)$ be the monoid defined by 
\begin{equation*}
\pres{Mon}{a_1, a_2, \dots, a_n}{a_1^{\beta_1} a_2^{\beta_2} \cdots a_n^{\beta_n} = a_k}.
\end{equation*}
Then as the rewriting system with the single rule $(a_1^{\beta_1} a_2^{\beta_2} \cdots a_n^{\beta_n}, a_k)$ is complete, monadic, context-free, and defines $\Pi$, it follows that $\Pi$ has context-free word problem \cite[Corollary~3.8]{Book1982b}. Hence, by Theorem~\ref{Thm:MAIN} any monoid which compresses to $\Pi$ has context-free word problem. Let now $A$ be a new alphabet, let $\alpha \in A^\ast$ be self-overlap free, and let $w_i \in \alpha (A^\ast - A^\ast\alpha A^\ast)$ for $1 \leq i \leq n$ be pairwise distinct words. Let $\tau$ be the homomorphism defined by mapping $a_i \mapsto w_i$ for all $1 \leq i \leq n$. Let $\Pi' = \Pi'(n, k, \{ \beta_i \}_{i=1}^n, \{ w_i \}_{i=1}^n)$ be the monoid defined by 
\begin{equation}\label{Eq:General_example}
\Pi' = \pres{Mon}{A}{\tau(a_1^{\beta_1} a_2^{\beta_2} \cdots a_n^{\beta_n}) \alpha = \tau(a_k) \alpha}.
\end{equation}
Then one readily sees that $L(\Pi') = \Pi$. Hence $\Pi'$ has context-free word problem, for any choice of $n, k, \beta_i$ and $w_i$ ($1 \leq i \leq n$). As a concrete example, if $A = \{ x, y \}$ and $\alpha = xy$, then the monoid $\Pi' = \Pi'(3,2, \{ 2,3,4 \},  \{xy,xyx,xyy\})$ defined by 
\begin{equation}\label{Eq:example_xyxyxyxyxy}
\pres{Mon}{x,y}{(xy)^2(xyx)^3(xyy)^4 xy = xyxxy}
\end{equation}
has context-free word problem, as it compresses to the monoid $\Pi(3, 2, \{2,3,4 \})$ defined by 
\begin{equation*}
\pres{Mon}{a_1, a_2, a_3}{a_1^2 a_2^3 a_3^4 = a_2}.
\end{equation*}
Verifying directly that \eqref{Eq:example_xyxyxyxyxy} has context-free word problem appears to be somewhat tedious. 

We conclude that the monoid \eqref{Eq:example_xyxyxyxyxy} has decidable rational subset membership problem, and more generally so too does any monoid of the form \eqref{Eq:General_example}. 
\end{example}

\subsection{Subspecial one-relation monoids}

We turn to the case when $M$ in \eqref{Eq:Main_Monoid} is defined by a single relation $u=v$. Assume without loss of generality that $|u|\geq |v|$. We say that $M$ is \textit{subspecial} if $u \in vA^\ast \cap A^\ast v$. Any special monoid (i.e. when the defining relation is $u=1$) is obviously subspecial. An element $e \in M$ is called an \textit{idempotent} if $e^2 = e$. Of course, the identity element $1$ is always a (trivial) idempotent. Associated to any idempotent $e$ is a \textit{maximal subgroup}, being the set of elements which are invertible with respect to this idempotent (alternatively, the group $\mathscr{H}$-class of $e$). Lallement \cite{Lallement1974} proved that a one-relation monoid $M$ contains a non-trivial idempotent if and only if (1) $M$ is special, and not right cancellative; (2) $|u|>|v|>0$ and $M$ is subspecial. By using Adian's overlap algorithm (see \cite{NybergBrodda2020b}), it is decidable whether a one-relation special monoid is right cancellative. Hence it is decidable whether a one-relation monoid $M$ contains a non-trivial idempotent. 

As $M$ is subspecial, it is clearly weakly compressible. Furthermore, it is not hard to see that $L(M)$ is also subspecial (see e.g. \cite[Lemma~5.4]{Kobayashi2000}). Thus to any subspecial monoid $M$ we can associate a special monoid $L_s(M)$, obtained by iterating weak compression until we arrive at a special monoid. Hence $M$ has context-free word problem if and only if $L_s(M)$ does, by Theorem~\ref{Thm:MAIN} (of course, the same is also true for any reversal-closed super-$\AFL$).  Using the structural results about maximal subgroups of subspecial monoids by Gray \& Steinberg \cite{Gray2018}, we can prove the following generalisation of the Muller-Schupp theorem.  

\begin{theorem}\label{Thm:Subspecial_has_CF_WP_iff_max}
Let $M$ be a subspecial (one-relation) monoid. Then $M$ has context-free word problem if and only if all of its maximal subgroups are virtually free.
\end{theorem}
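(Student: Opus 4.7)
The plan is to reduce to the special case via iterated compression, then invoke the author's previous work on special monoids together with structural results of Gray \& Steinberg. Since $M$ is subspecial, it is weakly compressible, and as noted in \S\ref{Sec:Weak_compression} (cf.\ \cite[Lemma~5.4]{Kobayashi2000}), the left monoid $L(M)$ is again subspecial. Because the sum of the lengths of the defining relations strictly decreases under compression, iterating yields a finite sequence $M = L^0(M), L^1(M), \dots, L^n(M) = L_s(M)$ terminating at a \emph{special} one-relation monoid $L_s(M)$. The class $\CF$ of context-free languages is a reversal-closed super-$\AFL$, so Theorem~\ref{Thm:MAIN} applies at each step and gives, by induction on $n$, that $M$ has context-free word problem if and only if $L_s(M)$ does.

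Having reduced to the special case, I would then invoke the author's earlier characterisation of context-free special monoids from \cite{NybergBrodda2020b}: a special one-relation monoid has context-free word problem if and only if its group of units is virtually free. (This combines a Muller--Schupp-style language-theoretic reduction for special monoids with Adian's result that the word problem for a special one-relation monoid reduces to that of its group of units, itself a one-relator group.) Thus $M$ has context-free word problem if and only if the group of units $G$ of $L_s(M)$ is a virtually free group.

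To finish, it remains to translate this condition into a statement about all the maximal subgroups of $M$. Here I would use the structural results of Gray \& Steinberg \cite{Gray2018} on subspecial one-relation monoids, which describe the maximal subgroup at the distinguished non-trivial idempotent of $M$ in terms of the group of units of the compressed monoid $L(M)$, and which can be iterated through the compression chain. This identifies every maximal subgroup of $M$ with $G = $ the group of units of $L_s(M)$ (up to isomorphism), so that \emph{all maximal subgroups of $M$ are virtually free} is equivalent to \emph{$G$ is virtually free}. Combining with the previous paragraph completes the proof.

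The main obstacle will be the third step: carefully pulling out from \cite{Gray2018} precisely the statement that the maximal subgroups of $M$ are all isomorphic to the group of units of $L_s(M)$, and tracking how the group $\mathscr{H}$-classes behave under each compression step so that the isomorphism survives the iteration. The first two steps are essentially immediate corollaries of Theorem~\ref{Thm:MAIN} and \cite{NybergBrodda2020b}, respectively.
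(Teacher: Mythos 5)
Your proposal is correct and follows essentially the same route as the paper: reduce to the special monoid $L_s(M)$ via iterated compression and Theorem~\ref{Thm:MAIN}, apply the characterisation of context-free special one-relation monoids from \cite{NybergBrodda2020b}, and identify the maximal subgroups of $M$ with the group of units of $L_s(M)$ via Gray \& Steinberg (the paper additionally handles the already-special case separately via Malheiro, and proves the forward direction by passing to $G$ as a finitely generated subsemigroup plus Muller--Schupp rather than through your iff chain, but both work). One small correction: not \emph{every} maximal subgroup of $M$ is isomorphic to $G = U(L_s(M))$ --- the group of units of $M$ itself is trivial when $M$ is subspecial but not special --- though since the trivial group is virtually free this does not affect the equivalence.
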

\begin{proof}
First, assume $M$ is special. Then, by a result due to Malheiro \cite[Theorem~4.6]{Malheiro2005}, all maximal subgroups of $M$ are isomorphic to the group of units $U(M)$ of $M$. The main theorem of \cite{NybergBrodda2020b} states that $M$ has context-free word problem if and only if $U(M)$ is virtually free. The result thus follows in this case. 

Suppose that $M$ is subspecial, but not special. By \cite[Lemma~5.2]{Gray2019}, the maximal subgroups of $M$ are all isomorphic to the group of units of $L_s(M)$, with the exception of the group of units of $M$, which is trivial (and hence virtually free). Let $G$ be one of the non-trivial maximal subgroups of $M$, so that $G \cong U(L_s(M))$. Now $L_s(M)$ is a finitely generated one-relation special monoid. Hence, by classical results, $G$ is a finitely generated one-relator group. 

Assume now that $M$ has context-free word problem. Then, as $G$ is a finitely generated subsemigroup of $M$, it follows from \cite[Proposition~8(b)]{Hoffmann2012} that $G$ has context-free word problem. Hence, by the Muller-Schupp theorem, $G$ is virtually free, as desired. For the converse, assume $G$ is virtually free. By the easy direction of the Muller-Schupp theorem, $G$ has context-free word problem. Then, by the main theorem of \cite{NybergBrodda2020b}, $L_s(M)$ has context-free word problem. Hence, by the main theorem of the present paper, $M$ has context-free word problem. 
\end{proof}

This gives a complete algebraic characterisation of the subspecial monoids with context-free word problem, extending the Muller-Schupp theorem to this class. 

\begin{corollary}\label{Cor:Subspec_RSMP}
Let $M$ be a subspecial one-relation monoid such that all of its maximal subgroups is virtually free. Then $M$ has decidable rational subset membership problem.
\end{corollary}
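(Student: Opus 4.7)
The proof will be a short combination of the previous two results in the section. The plan is as follows.

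First, I would invoke Theorem~\ref{Thm:Subspecial_has_CF_WP_iff_max}: since $M$ is subspecial (one-relation) and all of its maximal subgroups are virtually free, $M$ itself has context-free word problem. This step is the content-heavy reduction, but it is already established.

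Second, I would appeal to the general observation made in the course of the proof of Corollary~\ref{Cor:CF_L(M)_has_dec_RSMP}, namely that any finitely generated monoid with context-free word problem has decidable rational subset membership problem. For completeness, I would briefly recall the argument: if $A$ generates $M$ via $\pi\colon A^\ast \to M$, and $R \subseteq A^\ast$ is a regular language with $w \in A^\ast$, then $\pi(w) \in \pi(R)$ if and only if $w \in \WP_A^M / (\# R)$, where $/$ denotes the right quotient of a language by another language. Since regular languages are closed under prepending the symbol $\#$, and since context-free languages are closed under right quotient by regular languages, the quotient $\WP_A^M / (\# R)$ is context-free; and membership in a context-free language is uniformly decidable.

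Combining these two steps immediately yields that $M$ has decidable rational subset membership problem. The only place where any obstacle could conceivably arise is the invocation of Theorem~\ref{Thm:Subspecial_has_CF_WP_iff_max}, which is already proved in this section, so the corollary is essentially a direct consequence. No further work is needed beyond stating the two implications in sequence.
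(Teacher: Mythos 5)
Your proposal is correct and is exactly the argument the paper intends: the corollary is left without an explicit proof precisely because it is the immediate combination of Theorem~\ref{Thm:Subspecial_has_CF_WP_iff_max} with the observation, made just before Corollary~\ref{Cor:CF_L(M)_has_dec_RSMP}, that a context-free word problem yields a decidable rational subset membership problem via the right quotient $\WP_A^M / (\# R^\trev)$. (The only cosmetic remark is that the quotient should strictly be by $\# R^\trev$ rather than $\# R$, given the definition of $\WP_A^M$, but regular languages are closed under reversal so nothing changes.)
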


For example, one can check that the subspecial monoid 
\begin{equation}\label{Eq:Subspecial_EX}
M_3 = \pres{Mon}{x,y}{xy xyy xy x = x}
\end{equation}
compresses (in a single step) to $L_s(M_3) \cong \pres{Mon}{a,b,c}{abca = 1}$. Now it follows from \cite{Adian1966} that the group of units of $L_s(M_3)$ is isomorphic to $\pres{Gp}{p,q}{pqp=1}$, which is isomorphic to $\mathbb{Z}$ by removing the redundant generator $q$. Hence the non-trivial maximal subgroups of $M_3$ are all infinite cyclic, and in particular virtually free. Hence the monoid $M_3$ defined by \eqref{Eq:Subspecial_EX} has context-free word problem by Theorem~\ref{Thm:Subspecial_has_CF_WP_iff_max}, and it has decidable rational subset membership problem by Corollary~\ref{Cor:Subspec_RSMP}.

We end with a final corollary of Theorem~\ref{Thm:Subspecial_has_CF_WP_iff_max}, the statement of which does not use compression or subspeciality.

\begin{theorem}\label{Thm:Dec_if_OR_has_CF}
Let $M$ be a one-relation monoid containing a non-trivial idempotent. Then it is (uniformly) decidable whether $M$ has context-free word problem. 
\end{theorem}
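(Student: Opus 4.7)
The plan is to reduce the problem to the special one-relation case already handled in \cite{NybergBrodda2020b}. Given a one-relation presentation $\pres{Mon}{A}{u = v}$ with $|u| \geq |v|$, I would first verify the hypothesis using Lallement's dichotomy recalled just before the theorem: $M$ contains a non-trivial idempotent if and only if either $M$ is special (i.e.\ $v \equiv \varepsilon$) and not right cancellative, or $|u|>|v|>0$ and $M$ is subspecial. The first condition is decidable via Adian's overlap algorithm, as already noted; the second is a straightforward combinatorial check that $u$ lies in $vA^\ast \cap A^\ast v$. Either way, the hypothesis is effectively testable, so I may as well assume the input is subspecial.

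Next, I would iteratively apply weak compression to $M$. Each compression step is effective: for a subspecial one-relation monoid $\pres{Mon}{A}{u=v}$ with $v \neq \varepsilon$ the pair $(u,v)$ is sealed by $v$ and hence by a uniquely determined self-overlap free word $\alpha$, which can be read off from the defining relation; the presentation of $L(M)$ is then obtained by factoring over the suffix code $\Sigma_\ast(\alpha)$, which requires only computations with regular languages. As emphasised in \S\ref{Sec:Weak_compression}, each step strictly decreases the total relation length, and by \cite[Lemma~5.4]{Kobayashi2000} subspeciality of a one-relation monoid is preserved under compression. Hence the iteration terminates in finitely many effective steps at an incompressible subspecial one-relation monoid, which must be special; this is precisely $L_s(M)$.

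Finally, by Theorem~\ref{Thm:Subspecial_has_CF_WP_iff_max} (or, equivalently, by iterating Theorem~\ref{Thm:MAIN} with $\cc = \CF$), the monoid $M$ has context-free word problem if and only if $L_s(M)$ does; and since $L_s(M)$ is a special one-relation monoid, the main theorem of \cite{NybergBrodda2020b} decides whether $L_s(M)$ has context-free word problem. Composing these three procedures yields the desired uniform algorithm. There is no serious obstacle to overcome here: every ingredient has already been developed, either in this paper (the effective reduction $M \rightsquigarrow L_s(M)$ together with the equivalence of their context-free word problems) or in \cite{NybergBrodda2020b} (the decidability base case). The only step requiring mild verification is that iterated compression produces a special monoid in a bounded number of effective steps, which follows from the strict length decrease together with preservation of subspeciality.
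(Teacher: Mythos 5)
Your proposal is correct and follows essentially the same route as the paper: test specialness/subspeciality, iterate the (effective) weak compression to reach the special one-relation monoid $L_s(M)$, invoke Theorem~\ref{Thm:MAIN} to transfer the context-free property, and finish with the decidability result of \cite{NybergBrodda2020b} for special one-relation monoids. The extra details you supply (strict length decrease, preservation of subspeciality via \cite{Kobayashi2000}, and that an incompressible subspecial monoid is special) are exactly the implicit justifications behind the paper's construction of $L_s(M)$.
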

\begin{proof}
Suppose we are given a one-relation monoid presentation $\pres{Mon}{A}{u=v}$ for $M$. Assume without loss of generality that $|u| \geq |v|$. Then as $M$ contains a non-trivial idempotent, either $M$ is special, or else $M$ is subspecial with $|u| > |v| \geq 0$. In the first case, the defining relation is $u=1$, and by \cite[Theorem~B]{NybergBrodda2020b} we can hence decide (uniformly in $u$) whether this special one-relation monoid has context-free word problem. 

In the latter case, we repeatedly compress the relation $u=v$ until we find 
\begin{equation}\label{Eq:Compressed_subspecial}
L_s(M) = \pres{Mon}{B}{w=1}.
\end{equation}
By Theorem~\ref{Thm:MAIN}, $M$ has context-free word problem if and only if the special one-relation monoid \eqref{Eq:Compressed_subspecial} has context-free word problem; this latter problem is (uniformly) decidable by another application of \cite[Theorem~B]{NybergBrodda2020b}. 
\end{proof}

In 1992, Zhang \cite[Problem~3]{Zhang1992d} asked if it is decidable whether a special one-relation monoid has context-free word problem. This was recently answered affirmatively by the author \cite[Theorem~B]{NybergBrodda2020b}. The above Theorem~\ref{Thm:Dec_if_OR_has_CF} thus extends this answer from \textit{special} to \textit{subspecial} one-relation monoids. 

\section*{Acknowledgements}

The research in this article was carried out while the author was a Ph.D. student at the University of East Anglia. The author wishes to thank his supervisor Dr Robert D Gray for many useful comments and discussions.

{
\bibliography{weak_compression.bib} 
\bibliographystyle{amsalpha}
}
\end{document}